\newtheorem{thm}{Theorem}[section]
\newtheorem{lemma}[thm]{Lemma}
\newtheorem{cor}[thm]{Corollary}
\newtheorem{obs}[thm]{Observation}
\theoremstyle{definition}
\newtheorem{definition}[thm]{Definition}
\newtheorem{remark}[thm]{Remark}
\newcommand\oeis[1]{\href{https://oeis.org/#1}{#1}}
\newcommand{\N}{\mathbb{N}}
\newcommand{\letw}{\operatorname{\mathtt{letw}}}
\newcommand{\retw}{\operatorname{\mathtt{retw}}}
\let\ge\geqslant
\let\leq\leqslant
\let\geq\geqslant
\tikzstyle{dot}=[fill, circle, minimum size=0.5ex, inner sep=0pt]
\tikzstyle{ml}=[line width=0.7pt, -]
\tikzstyle{mb}=[line width=1pt, -]
\def\ptwoone{%
  \tikz[scale=0.5]{
    \node[dot] (a) at(0,0) {};
    \node[dot] (b) at(0.5,0) {};
    \node[dot] (c) at(1,0) {};
    \node[dot] (d) at(1.5,0) {};
    \draw[out=70,in=110, ml] (a) edge (d);
    \draw[out=70,in=100, ml] (b) edge (c);
    \draw[ml] (a) edge (b);
    \draw[ml] (c) edge (d);
  }%
}
\def\ponetwo{%
  \tikz[scale=0.5]{
    \node[dot] (a) at(0,0) {};
    \node[dot] (b) at(0.5,0) {};
    \node[dot] (c) at(1,0) {};
    \node[dot] (d) at(1.5,0) {};
    \draw[out=70,in=110, ml] (a) edge (c);
    \draw[out=70,in=90, mb, white] (b) edge (d);
    \draw[out=70,in=90, ml] (b) edge (d);
    \draw[ml] (a) edge (b);
    \draw[ml] (c) edge (d);
  }%
}
\def\pthreetwoone{
  \tikz[scale=0.5]{
    \node[dot] (a) at(0,0) {};
    \node[dot] (b) at(0.5,0) {};
    \node[dot] (c) at(1,0) {};
    \node[dot] (d) at(1.5,0) {};
    \node[dot] (e) at(2,0) {};
    \node[dot] (f) at(2.5,0) {};
    \draw[out=60,in=120, ml] (a) edge (f);
    \draw[out=60,in=120, ml] (b) edge (e);
    \draw[out=60,in=120, ml] (c) edge (d);
    \draw[ml] (a) edge (b);
    \draw[ml] (b) edge (c);
    \draw[ml] (d) edge (e);
    \draw[ml] (e) edge (f);
  }%
}
\def\ponetwothree{
  \tikz[scale=0.5]{
    \node[dot] (a) at(0,0) {};
    \node[dot] (b) at(0.5,0) {};
    \node[dot] (c) at(1,0) {};
    \node[dot] (d) at(1.5,0) {};
    \node[dot] (e) at(2,0) {};
    \node[dot] (f) at(2.5,0) {};
    \draw[out=60,in=120, ml] (a) edge (d);
    \draw[out=60,in=120, mb, white] (b) edge (e);
    \draw[out=60,in=120, ml] (b) edge (e);
    \draw[out=60,in=120, mb, white] (c) edge (f);
    \draw[out=60,in=120, ml] (c) edge (f);
    \draw[ml] (a) edge (b);
    \draw[ml] (b) edge (c);
    \draw[ml] (d) edge (e);
    \draw[ml] (e) edge (f);
  }%
}
\def\ponethreetwo{
  \tikz[scale=0.5]{
    \node[dot] (a) at(0,0) {};
    \node[dot] (b) at(0.5,0) {};
    \node[dot] (c) at(1,0) {};
    \node[dot] (d) at(1.5,0) {};
    \node[dot] (e) at(2,0) {};
    \node[dot] (f) at(2.5,0) {};
    \draw[out=60,in=120, ml] (a) edge (d);
    \draw[out=60,in=120, mb, white] (c) edge (e);
    \draw[out=60,in=120, ml] (c) edge (e);
    \draw[out=60,in=120, mb, white] (b) edge (f);
    \draw[out=60,in=120, ml] (b) edge (f);
    \draw[ml] (a) edge (b);
    \draw[ml] (b) edge (c);
    \draw[ml] (d) edge (e);
    \draw[ml] (e) edge (f);
  }%
}
\def\ptwoonethree{
  \tikz[scale=0.5]{
    \node[dot] (a) at(0,0) {};
    \node[dot] (b) at(0.5,0) {};
    \node[dot] (c) at(1,0) {};
    \node[dot] (d) at(1.5,0) {};
    \node[dot] (e) at(2,0) {};
    \node[dot] (f) at(2.5,0) {};
    \draw[out=60,in=120, ml] (a) edge (e);
    \draw[out=60,in=120, ml] (b) edge (d);
    \draw[out=60,in=120, mb, white] (c) edge (f);
    \draw[out=60,in=120, ml] (c) edge (f);
    \draw[ml] (a) edge (b);
    \draw[ml] (b) edge (c);
    \draw[ml] (d) edge (e);
    \draw[ml] (e) edge (f);
  }%
}
\def\pthreeonetwo{
  \tikz[scale=0.5]{
    \node[dot] (a) at(0,0) {};
    \node[dot] (b) at(0.5,0) {};
    \node[dot] (c) at(1,0) {};
    \node[dot] (d) at(1.5,0) {};
    \node[dot] (e) at(2,0) {};
    \node[dot] (f) at(2.5,0) {};
    \draw[out=60,in=120, ml] (c) edge (d);
    \draw[out=60,in=120, ml] (a) edge (e);
    \draw[out=60,in=120, mb, white] (b) edge (f);
    \draw[out=60,in=120, ml] (b) edge (f);
    \draw[ml] (a) edge (b);
    \draw[ml] (b) edge (c);
    \draw[ml] (d) edge (e);
    \draw[ml] (e) edge (f);
  }%
}
\def\ptwothreeone{
  \tikz[scale=0.5]{
    \node[dot] (a) at(0,0) {};
    \node[dot] (b) at(0.5,0) {};
    \node[dot] (c) at(1,0) {};
    \node[dot] (d) at(1.5,0) {};
    \node[dot] (e) at(2,0) {};
    \node[dot] (f) at(2.5,0) {};
    \draw[out=60,in=120, ml] (c) edge (e);
    \draw[out=60,in=120, ml] (a) edge (f);
    \draw[out=60,in=120, mb, white] (b) edge (d);
    \draw[out=60,in=120, ml] (b) edge (d);
    \draw[ml] (a) edge (b);
    \draw[ml] (b) edge (c);
    \draw[ml] (d) edge (e);
    \draw[ml] (e) edge (f);
  }%
}
\let\phi\varphi
\let\epsilon\varepsilon
\author{Célia Biane$^1$, Greg Hampikian$^2$, Sergey Kirgizov$^1$, Khaydar Nurligareev$^1$}
\affil{\small \rm $^1$LIB, Université de Bourgogne, Dijon, France \protect\\
 \rm $^2$CompGenomics, Box 1454, Boise, Idaho 83701, USA

 Corresponding author: sergey.kirgizov@u-bourgogne.fr
  }
\date{\today}
\title{Endhered patterns in matchings and RNA}
\begin{document}

\maketitle

\begin{abstract}
  An {\em endhered (end-adhered) pattern} is a subset of arcs in
  matchings, such that the corresponding starting points are
  consecutive and the same holds for the ending points. Such patterns
  are in one-to-one correspondence with the permutations. We focus on
  the occurrence frequency of such patterns in matchings and
  native (real-world) RNA structures with pseudoknots.  We present
  combinatorial results related to the distribution and asymptotic behavior
  of the pattern 21, which corresponds to two consecutive base pairs
  frequently encountered in RNA, and the pattern 12,
  representing the archetypal minimal pseudoknot.  We show that in
  matchings these two patterns are equidistributed, which is quite
  different from what we can find in native RNAs. We also examine the
  distribution of endhered patterns of size 3, showing how the
  patterns change under the transformation called {\em endhered
    twist}. Finally, we compute the distributions of endhered patterns
  of size 2 and 3 in native secondary RNA structures with
  pseudoknots and discuss possible outcomes of our study.
\end{abstract}

\section{Introduction}

Ribonucleic acids (RNAs) are macromolecules fulfilling many
biological functions: they code for protein, are involved in the
regulation of gene expression, can have catalytic activities and store
the genetic information of certain viruses. The structure of RNAs is
defined at the primary level as sequences of four nucleotides: Adenine
(A), Uracil (U), Guanine (G), and Cytosine (C). The secondary structure
abstracts from the nature of the nucleotide and considers only the
bonds forming between nucleotides during the synthesis of RNAs and
shaping how the molecules folds in space.
Two types of bonds are formed during the RNA folding process:
phospho-diester bonds (known as strong bonds) are formed between pairs
of consecutive nucleotides in the sequence forming the RNA chain, and
hydrogen bonds (also known as weak bonds) are formed between pairs of
nucleotides distant in the sequence.
The secondary structure represents an intermediate level between the
primary sequence and the shape, and has the advantage of being both
relevant from a biological perspective and tractable from a
computational point of view.

\subsection{Models of RNA secondary structures}
\label{subsec:models}

RNA secondary structures have been formalized as graphs primarily by
Waterman~\cite{waterman} to tackle the problem of prediction of
secondary structures from the primary structure, which is more easily
measurable. Ponty~\cite{ponty} gives a variant of Waterman definition
of RNA secondary structure without pseudoknots and having a minimal
number $\theta$ of unpaired positions between pair positions.
Formally, {\em Waterman-Ponty RNA secondary structure} $S$ of size $n$
is defined as a set of base-pairs $(i,j), 1 \leq i < j \leq n$, such that:
\begin{enumerate}
    \item Each position is monogamous, $\forall (i,j) \ne (i',j') \in S : \{i,j\} \cap \{i',j'\} = \emptyset $.
    \item Minimal distance $\theta$ between paired nucleotides, $\forall(i,j) \in S :j - i \geq \theta$.
    \item No pseudoknot allowed, $\forall (i,j),(i',j') \in S, i < i':(j'<j)$ or $(j<i')$.
\end{enumerate}

These structures can be represented using the {\em dot-bracket
  notation}.  A secondary structure of an $n$-nucleotide RNA is
encoded as an $n$-length sequence of parentheses \{``('',``)''\} and dots
``.'', where an open parenthesis represents a nucleotide paired to
another nucleotide represented by a closed parenthesis, and dots
correspond to unpaired nucleotides.  {\em The extended dot-bracket
  notation} includes also other types of parentheses: ``[]'',
``\{\}'', ``<>'', ``aA'', etc.  The extended dot-bracket notation
enables the representation of pairings in
pseudoknots. Figure~\ref{fig:enter-label} shows 4 different
representations of an example of RNA secondary structure.

\begin{figure}[h]
  \centering
  \begin{subfigure}[t]{0.3\textwidth}
    \centering
    \includegraphics[width=10em]{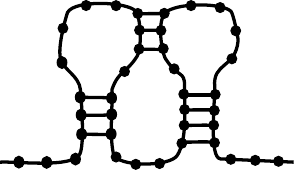}
    \caption{}
    \label{fig:enter-label:A}
  \end{subfigure}
  \begin{subfigure}[t]{0.6\textwidth}
    \centering
    \includegraphics[width=20em]{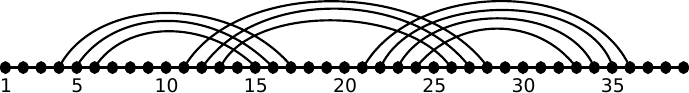}
    \caption{}
    \label{fig:enter-label:B}
  \end{subfigure}

  \vspace{1em}
  
  \begin{subfigure}[t]{0.3\textwidth}
    \centering
    \includegraphics[width=10em]{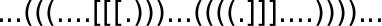}
    \caption{}
    \label{fig:enter-label:C}
  \end{subfigure}
  \begin{subfigure}[t]{0.6\textwidth}
    \centering
    \includegraphics[width=20em]{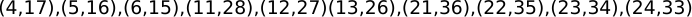}
    \caption{}
    \label{fig:enter-label:D}
  \end{subfigure}
  \caption{A drawing (a), an arc diagram (b),
    an extended dot bracket notation (c), and a set of pairs (d)
    representing an example of an RNA secondary
    structure.}
  \label{fig:enter-label}
\end{figure}

Other models of RNA secondary structures were studied from
combinatorial point of view. Haslinger and Standler~\cite{haslinger}
examined enumerative and asymptotic properties of {\em bi-secondary
  structures,} i.e., arc diagrams with arcs both in upper
  and lower part of the plane but without arc intersections.  The
concept of RNA shape was introduced by Giegerich, Voss, and
Rehmsmeier~\cite{giegerich} who developed an algorithm for the
computation of minimum free energy RNA shapes. The number and
asymptotics of RNA shapes was studied by Lorenz, Ponty, and
Clote~\cite{lorenz2008}.  Reidys and Wang~\cite{reidys}
  considered a generalization of RNA shapes defined on $k$
  mutually-crossing arcs.

\medskip
  
In this paper, we look at RNA structures with no restrictions on the number of arc crossings.

\subsection{Notion of pattern in RNA secondary structures}

At the biological level, common patterns have been observed in orthodox
structures: single strand regions, hairpin and internal loops, bulges
and various computational tools exist for detecting these patterns from primary
sequence, including the work of Macke {\em et al.}~\cite{macke2001}.
More and more information is being gathered from three dimensional
reconstructions of RNA molecules paving the way to a better
comprehension of the laws governing the RNA folding process and the
formation of RNA patterns.

Rødland~\cite{rodland} proposed a classification of RNA secondary
structures in four level of abstractions: nucleotide, ladder, stem and
collapsed level, based on the considered internal patterns.  In his
work, the nucleotide level corresponded to structures with arc
diagrams containing unpaired nucleotides, the ladder level
corresponded to structures abstracted from unpaired nucleotides, the
stem level was abstracted from bulges and internal loops, and the
collapsed level was abstracted from nested loops. Rødland studied
different kind of pseudoknot patterns of increasing complexity:
H-pseudoknot, double hairpin pseudoknot and pseudotrefoil. He counted
these patterns in RNA secondary structures of increasing complexity
and studied their asymptotics. He also showed that the theoretical
number of pseudoknots in secondary structures is higher than in real
secondary structure of the Rfam~\cite{rfam2003, rfam2005}
and PseudoBase~\cite{pseudobase} databases. Note that Rødland's
collapsed structures correspond to RNA shapes studied by Giegerich,
Voss, and Rehmsmeier~\cite{giegerich}.

Quadrini~\cite{quadrini2021} addressed the problem of searching a
given structural pattern, defined as a sequence of crossing loops in a
RNA secondary structure or shape and characterized by arbitrary number
of pseudoknots. She proposed polynomial time algorithms for their
identification. A paper by Gan, Pasquali, and Schlick~\cite{gan}
studied RNA structures and their patterns using graph-based
representations. In the future, it would be interesting to compare the
relationships between different kinds of patterns.

\bigskip

In this paper, we study the formation of endhered patterns (formally
defined below) in matchings.  In Section~\ref{section:t}, we do it
from a theoretical perspective.  In Section~\ref{section:p}, we
observe the number of occurrences of such patterns in RNA secondary
structures derived from experimentally determined 3D RNA structures.
We conclude by discussing possible outcomes of our study in
Section~\ref{section:c}.

\section{Endhered patterns in matchings}
\label{section:t}

\subsection{Basic definitions}
\label{subsection:def}

By (perfect) {\em matching} of size $n$, we mean the sequence of $2n$
points $(1, 2, \ldots, 2n-1, 2n)$ endowed with a set of $n$ arcs, such
that every point is linked to one and only one another point.
Figure~\ref{fig:example:A} shows examples of matchings of small sizes. 
Matchings of size $n$ can be considered as
fixed-point-free involutions in the symmetric group $S_{2n}$.  Thus,
the matching with 4 arcs at the bottom of Figure~\ref{fig:example:A}
can be represented by the
permutation $3\,6\,1\,5\,4\,2\,8\,7$, which is the product of disjoint
transpositions $(13)(26)(45)(78)$.

\begin{figure}[h]
  \centering
  \begin{subfigure}[t]{0.45\textwidth}
    \includegraphics{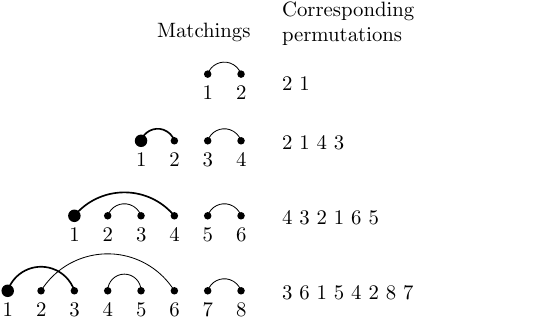}
    \caption{}
    \label{fig:example:A}
  \end{subfigure}\quad
  \begin{subfigure}[t]{0.45\textwidth}
    \includegraphics{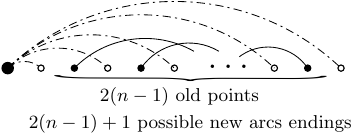}
    \caption{}
    \label{fig:example:B}
  \end{subfigure}
  \caption{An example of matching construction, corresponding permutations (a),
    and a schema of recursive construction of matchings (b).}
  \label{fig:example}
\end{figure}

 For a positive integer $n$, any matching of size $n$
  can be uniquely constructed from some matching of size $n-1$ by the
  following procedure.  We add a new arc starting at the left of the
  already existing $2(n-1)$ points and ending at some of $2(n-1)+1$
  possible new positions (see Figure~\ref{fig:example:B}).
  This observation leads us to the
  following recurrence relation for the number of matchings of size
  $n$: $a_n = (2n-1)a_{n-1}$ with $a_0 = 1$.  As a consequence, $a_n =
  (2n-1)!! = (2n-1)\cdot(2n-3)\cdot\ldots\cdot3\cdot1$.  The
  corresponding sequence starting with $1, 1, 3, 15, 105, 945, 10395$
  is known as \oeis{A1147} in Sloane's Encyclopedia~\cite{oeis}.

In mathematical literature, matchings often appear in different
contexts, from the representation theory of Lie algebras~\cite{csm} to
the geometry of moduli spaces of flat connections on
surfaces~\cite{amr}.  Efficient generating algorithms for involutions
with (without) fixed points were established by Vajnovszki~\cite{vaj}.
Ardnt's book~\cite{matters-computational} also presents interesting
generating algorithms for involutions and other combinatorial
structures.

  Different kinds of patterns in matchings are being actively studied
  in combinatorics for the past twenty years.
  Initially, the interest to this topic came from the rapidly
  developing study of permutation patterns, since a matching can be
  thought as a permutation of a specific form.  From this perspective,
  we say that a matching $\sigma$ is a {\em pattern} in a matching
  $\mu$ if $\sigma$ can be obtained from $\mu$ by deleting some of its
  arcs (and consistently relabelling the remaining vertices).  For
  instance, Chen, Deng, Du, Stanley, and Yan~\cite{cddsy} studied
  distributions of crossings and nestings.
  Jelínek~\cite{jel}, as well as Bloom and Elizalde~\cite{be},
  considered pattern avoiding matchings in the case when $\sigma$ is a
  permutational matching of size 3.  An extension for more general
  patterns was elaborated by Cervetti and Ferrari~\cite{matteo-luca},
  while other authors, such as Chen, Mansour and Yan~\cite{chmy},
  Jelínek and Mansour~\cite{jm}, considered partial patterns.

As we see, what matters in the above investigations is the relative
positions of arcs that form a pattern.  At the same time, the
distances between starting and ending points of these arcs are not
fixed.  The main object of our study concerns specific restrictions
imposed on the arcs.  Namely, the starting points of a pattern, as
well as its ending points, form an interval, while the distance
between these two intervals may vary.  We call such patterns {\em
  endhered} (end-adhered) to emphasize the nature of these
restrictions. 

\bigskip

\begin{definition}
  An {\em endhered pattern} is a matching, such that the starting
  point of any of its arcs precedes the ending point of any other arc.
  In other words, a matching
  of size $p$ written as a
  permutation $\sigma = \sigma_1\ldots\sigma_{2p}$ is an endhered
  pattern if $\pi = \sigma_{p+1}\ldots\sigma_{2p}$ is a permutation of size
  $p$ (such matchings are also called {\em permutational}).
  Figure~\ref{fig:definition:A} presents an example of an endhered
  pattern of size $3$.  In the following, we identify endhered
  patterns with the corresponding permutations.

  We say that a matching $\mu = \mu_1\ldots\mu_{2n}$ {\em contains} an
  endhered pattern $\pi = \pi_1\ldots\pi_{p}$ at position $(i+1,j+1)$,
  where $i\geqslant0$ and $i+p \leqslant j \leqslant n-p$, if
  \[
    \mu_{s+i} = \pi_s^{-1} + j,
    \qquad
    s = 1, \ldots, p
  \]
  (here, $\pi^{-1} = \pi_1^{-1}\ldots\pi_{p}^{-1}$ is the inverse to
  the permutation $\pi$).  In other words, $\mu$ contains $p$ arcs
  whose starting points are $i+1, \ldots, i+p$, whose ending points
  are $j+1, \ldots, j+p$, and that form the endhered pattern $\pi$.
  Figure~\ref{fig:definition:B} shows an example
  of a matching containing pattern shown on Figure~\ref{fig:definition:A}.
\label{def:endhered}
\end{definition}

\begin{figure}[h]
  \centering
  \begin{subfigure}[t]{0.3\textwidth}
      \centering
    \includegraphics[]{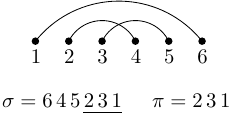}
    \caption{}
    \label{fig:definition:A}
  \end{subfigure}\hspace{0.3em}
  \begin{subfigure}[t]{0.6\textwidth}
    \centering
    \includegraphics[]{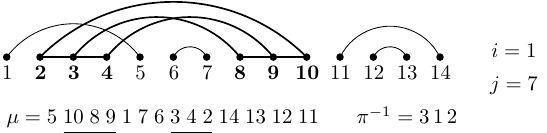}
    \caption{}
    \label{fig:definition:B}
  \end{subfigure}
  
  \caption{Endhered pattern 231 (a) and an example (b) of its occurrence.}
  \label{fig:definition}
\end{figure}

The endhered patterns have been rarely studied in the literature,
although they may shed light on the formation of collapsed RNA
structures from Rødland's paper about pseudoknots~\cite{rodland}.  The
only work in this direction that we are aware of is the paper of
Baril~\cite{baril} who examined one of two endhered patterns of size
$2$ in his study on irreducible involutions and permutations.

\subsection{Endhered twists and engendered symmetries}
\label{subsec:twist}

Given an endhered pattern $\pi$, let us denote by $a_{n,k}(\pi)$ the
number of matchings of size $n$ with $k$ occurrences of $\pi$.  If,
additionally, $\tau$ is another endhered pattern, then we designate
by $a_{n,k,m}(\pi,\tau)$ the number of matchings of size $n$ with
$k$ and $m$ occurrences of patterns $\pi$ and $\tau$, respectively.
Certain patterns, for instance $\pi=\ptwoone$ and $\tau=\ponetwo$,
have the same distribution, meaning that
$a_{n,k}(\pi)=a_{n,k}(\tau)$.  The goal of this Subsection is to
establish such equidistributed classes of endhered patterns with the
help of bijections, i.e., without direct enumeration.  To this end, we
apply matching transformations that we call {\em endhered twists}.

\begin{definition}
  The {\em left endhered twist} (resp. {\em right endhered twist}) is
  a transformation that takes a matching $\mu$ and produces a matching $\letw(\mu)$ (resp. $\retw(\mu)$)
  such that all runs of consecutive starting (resp. ending) points are
  reversed.
  Figure~\ref{fig:endhered-twist} shows an example of the right twist.
  \label{def:endhered-twist}
\end{definition}

\begin{figure}[h]
  \centering
  \includegraphics{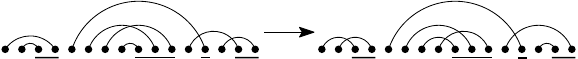}
  \caption{An example of the right endhered twist, runs of right points are underlined.}
  \label{fig:endhered-twist}
\end{figure}

\begin{obs}
  Endhered twists of endhered patterns correspond to classical symmetries on permutations.
  Thus, the right endhered twist applied to an endhered pattern $\pi=\pi_1\ldots\pi_p$ is its reverse: $\retw(\pi)=\pi_p\ldots\pi_1$.
  At the same time, the left endhered twist is the complement:  $\letw(\pi)=(p+1-\pi_1)\ldots(p+1-\pi_p)$.
  For example,
  $$
  \begin{aligned}
    \retw(321)  = 123, \qquad & \retw(231) = 132, \qquad \retw(312) = 213, \\
    \letw(321)  = 123, \qquad & \letw(132) = 312, \qquad \letw(213) = 231.\\
  \end{aligned}
  $$
  Geometrically, if we represent permutations as square tables, the endhered twists are axial symmetries (see Figure~\ref{fig:retw}).
  \label{obs:endhered-twist}
\end{obs}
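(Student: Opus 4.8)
The plan is to verify the claimed correspondences directly from the definitions, since the Observation asserts three things: that the right endhered twist restricted to an endhered pattern equals the reverse, that the left endhered twist equals the complement, and that geometrically these are axial symmetries of the permutation square. Each of these reduces to unwinding how the twist acts on the starting and ending points of a permutational matching.

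**First I would** treat the right twist. Recall that an endhered pattern $\pi = \pi_1\ldots\pi_p$ is, as a matching, the permutation $\sigma = \sigma_1\ldots\sigma_{2p}$ whose second half $\sigma_{p+1}\ldots\sigma_{2p}$ is a genuine permutation of $\{1,\ldots,p\}$; the starting points are exactly $1,\ldots,p$ and the ending points are exactly $p+1,\ldots,2p$. The crucial point is that in an endhered pattern \emph{all} ending points are consecutive, so they form a single run, and the right twist reverses this run wholesale. I would show that reversing the order in which the ending points are visited, while leaving the starting points fixed, has the effect of sending the arc that landed in position $p+1-s$ to position $s$ (counting ending positions from the left). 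Tracking what this does to the induced permutation on $\{1,\ldots,p\}$ yields precisely $\retw(\pi) = \pi_p\ldots\pi_1$, the reverse. Symmetrically, the left twist reverses the single run of starting points while fixing the ending points; unwinding this and using that complementing a permutation corresponds to reflecting the starting configuration gives $\letw(\pi) = (p+1-\pi_1)\ldots(p+1-\pi_p)$.

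**The cleanest way** to organize both computations is to use the matching-permutation dictionary from Definition~\ref{def:endhered}, namely that the arc data is encoded through $\pi^{-1}$, and to note that reversing a run corresponds to conjugating by the longest element (the order-reversing involution) on the relevant index set. For the right twist this reversal acts on output values, producing the reverse of $\pi$; for the left twist it acts on input positions, producing the complement. I would then verify the tabulated examples ($\retw(321)=123$, $\letw(213)=231$, and so on) as a sanity check, since these are immediate once the two formulas are in hand. Finally, for the geometric claim I would invoke the standard fact that on the $p\times p$ permutation matrix, the reverse is reflection across the vertical axis and the complement is reflection across the horizontal axis, and point to Figure~\ref{fig:retw}.

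**The main obstacle** is purely bookkeeping: keeping the inversion $\pi \mapsto \pi^{-1}$ in Definition~\ref{def:endhered} straight so that ``reversing the run of ending points'' is correctly translated into an operation on $\pi$ rather than on $\pi^{-1}$. It is easy to conflate reverse with complement or to reflect across the wrong axis, so I expect the care to lie entirely in matching each twist to the correct symmetry; there is no genuine difficulty once the encoding is pinned down.
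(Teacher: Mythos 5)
The paper offers no proof of this Observation beyond the worked examples and Figure~\ref{fig:retw}, and your direct verification is correct and is exactly the intended justification: an endhered pattern has a single run of starting points and a single run of ending points, so each twist reverses one of these runs wholesale, acting on the positions of the associated permutation (giving the reverse, for $\retw$) or on its values (giving the complement, for $\letw$). One small slip: ``conjugating by the longest element'' would produce the reverse-complement; the correct operation is one-sided composition with the order-reversing involution, which is what your subsequent input/output bookkeeping actually carries out.
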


\begin{figure}
  \centering
  \includegraphics[]{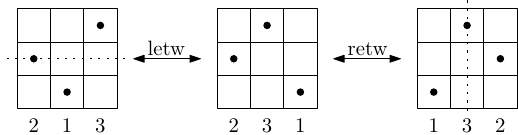}
  \caption{Geometrical meaning of endhered twists.}
  \label{fig:retw}
\end{figure}

\begin{lemma}
  Two endhered patterns $\pi$ and $\tau$ of the same size have the same joint distribution
  if they are identical under the left or right endhered twists.
  In other words, if $\pi = \letw(\tau)$ or $\pi = \retw(\tau)$, then
  \begin{equation}
    a_{n,k,m}(\pi,\tau) = a_{n,k,m}(\tau,\pi)
   \label{eq:etwist}
  \end{equation}
  for any integers $n$, $k$ and $m$.
  \label{lemma:etwist}
  In particular, $a_{n,k}(\pi) = a_{n,k}(\tau)$.
\end{lemma}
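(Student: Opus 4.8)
The plan is to let the twists serve as their own bijections. First I would note that reversing a maximal run of consecutive starting (resp. ending) points and then reversing it again restores the matching, so both $\letw$ and $\retw$ are involutions on the set of matchings of size $n$, hence bijections. It is therefore enough to understand how a single twist acts on the occurrences of a pattern, and the whole statement will follow by transport along these bijections.

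The core step I would isolate as an auxiliary claim: applied to a matching $\mu$, the right twist $\retw$ induces a bijection between the occurrences of an endhered pattern $\pi$ in $\mu$ and the occurrences of $\retw(\pi)$ in $\retw(\mu)$, and symmetrically $\letw$ matches occurrences of $\pi$ with occurrences of $\letw(\pi)$. To establish the $\retw$ case I would fix an occurrence of $\pi$ at position $(i+1,j+1)$, so that the starting point $i+s$ is joined to the ending point $j+\pi_s^{-1}$ for $s=1,\ldots,p$. The ending points $j+1,\ldots,j+p$ lie inside a unique maximal run $[a,b]$ of consecutive ending points, and $\retw$ acts on that run by fixing every starting point while reflecting each ending position $e\in[a,b]$ to $a+b-e$. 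Thus the same $p$ arcs retain the starting points $i+1,\ldots,i+p$ but acquire the ending points $a+b-j-1,\ldots,a+b-j-p$, again a consecutive block.

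A short computation then identifies the twisted pattern. Writing the new occurrence at position $(i+1,j'+1)$ with $j'=a+b-j-p-1$, one finds $(\pi'_s)^{-1}=p+1-\pi_s^{-1}$, that is, $\pi'^{-1}$ is the complement of $\pi^{-1}$; by the standard symmetry identity $(\pi^{-1})^{c}=(\pi^{r})^{-1}$ this gives $\pi'=\pi^{r}=\retw(\pi)$, exactly as predicted by \Cref{obs:endhered-twist}. The $\letw$ case is entirely parallel, with starting points reflected, ending points fixed, and the complement taking the place of the reverse. Since the reflection inside each run is an involution and $\retw$ is an involution on matchings, this correspondence is a genuine bijection between occurrence sets.

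With the claim available the lemma follows at once. Assume $\tau=\retw(\pi)$ (the case $\tau=\letw(\pi)$ is handled identically); as $\retw$ is an involution we also have $\pi=\retw(\tau)$. Then $\retw$ carries every matching with $k$ occurrences of $\pi$ and $m$ occurrences of $\tau$ to a matching with $k$ occurrences of $\retw(\pi)=\tau$ and $m$ occurrences of $\retw(\tau)=\pi$, and conversely, giving a bijection between the families counted by $a_{n,k,m}(\pi,\tau)$ and by $a_{n,k,m}(\tau,\pi)$; this is precisely \eqref{eq:etwist}, and summing over $m$ yields $a_{n,k}(\pi)=a_{n,k}(\tau)$. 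The main obstacle is the auxiliary claim: one must check that reversing an \emph{entire} maximal run, which may be strictly larger than the pattern's block of ending points, still sends each individual occurrence to one well-defined occurrence of the twisted pattern, keeps the relevant endpoints consecutive, and does so bijectively. The permutation-symmetry bookkeeping relating reverse, complement, and inverse is exactly the place where care is required.
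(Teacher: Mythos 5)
Your proof is correct and follows essentially the same route as the paper's: apply the endhered twist to the whole matching as a self-inverse bijection and observe that it exchanges occurrences of $\pi$ and $\tau$ without creating or destroying any, then read off \eqref{eq:etwist}. The paper states this in two lines; your auxiliary claim, with the explicit reflection $e\mapsto a+b-e$ inside each maximal run and the identity $(\pi^{r})^{-1}=(\pi^{-1})^{c}$, merely fills in the verification the paper leaves implicit.
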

\begin{proof}
  Suppose that $\pi$ is obtained from $\tau$ by applying the left endhered twist.
  Let $\mu$ be a matching containing $k$ occurrences of $\pi$ and $m$ occurrences of $\tau$.
  Applying the left twist to the whole $\mu$, we transform every occurrence of $\pi$ to $\tau$ and vice versa.
  No other occurrence of $\pi$ or $\tau$ are created.
  This implies relation~\eqref{eq:etwist}.
  The cases of the right twist is obtained {\em mutatis mutandis}.
\end{proof}

\begin{remark}
  Given a positive integer $n$ and an endhered pattern $\pi$, the
  value $a_{n,0}(\pi)$ is the number of matchings of size $n$ avoiding
  $\pi$.  We say that two endhered patterns $\pi$ and $\tau$ are
  {\em Wilf equivalent} if $a_{n,0}(\pi) = a_{n,0}(\tau)$ for every
  $n$.  For example, Wilf equivalent patterns of size 2 and 3 are
  shown in Figure~\ref{fig:wilf}.  Matchings avoiding the pattern
  \ptwoone\ correspond to involutions with no fixed points and no
  successions considered by Baril~\cite{baril}.

  For the endhered patterns of size 2 and 3, as we will see in this
  paper, their Wilf equivalence implies that the corresponding
  patterns are equidistributed.  In other words, if $a_{n,0}(\pi) =
  a_{n,0}(\tau)$ for every $n$, then $a_{n,k}(\pi) =
  a_{n,k}(\tau)$ for all $n$ and $k$.  In the general case, this
  question is not trivial.
\end{remark}

\begin{figure}[h]
  \centering
  \includegraphics[width=0.8\textwidth]{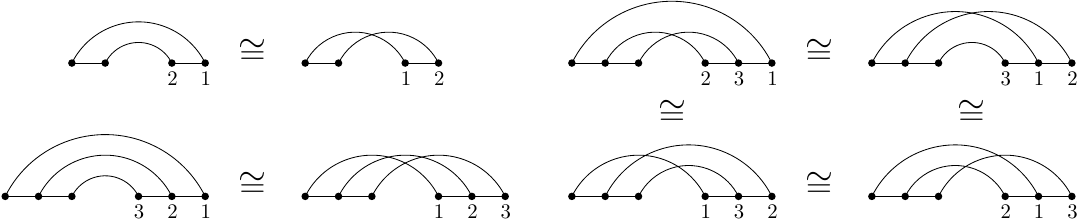}
  \caption{Wilf equivalence classes of endhered patterns of size 2 and 3.}
  \label{fig:wilf}
\end{figure}

\begin{cor}
  The joint distribution of the endhered patterns $\pi=1\ldots p$ and
  $\tau = p\ldots 1$ is symmetric.  In particular, the number of
  size-$n$ matchings containing $k$ \ptwoone\ and $m$ \ponetwo\ equals
  the number of size-$n$ matchings containing $m$ \ptwoone\ and $k$
  \ponetwo.
  \label{cor:12_21}
\end{cor}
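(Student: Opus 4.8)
The plan is to derive the corollary directly from Lemma~\ref{lemma:etwist}, so the only real work is to verify the hypothesis of that lemma for the pair $\pi = 1\ldots p$ and $\tau = p\ldots 1$, and then to translate the resulting identity into the language of symmetric joint distributions.

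First I would record, using Observation~\ref{obs:endhered-twist}, that the identity pattern $\pi = 1\ldots p$ is sent to $\tau = p\ldots 1$ by \emph{both} twists. Indeed, its reverse is $\retw(\pi) = p\ldots 1 = \tau$, and its complement is $\letw(\pi) = (p+1-1)(p+1-2)\ldots(p+1-p) = p\,(p-1)\ldots 1 = \tau$ as well. Since each twist is an involution, this is equivalent to $\pi = \letw(\tau)$ (and also $\pi = \retw(\tau)$), which is precisely the hypothesis required to invoke the lemma. The reason both twists agree here is the elementary fact that for the identity permutation the complement and the reverse coincide.

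Then I would apply Lemma~\ref{lemma:etwist} to this pair to obtain $a_{n,k,m}(\pi,\tau) = a_{n,k,m}(\tau,\pi)$ for all $n$, $k$, $m$, that is, equation~\eqref{eq:etwist}. The remaining step is purely notational: by definition $a_{n,k,m}(\tau,\pi)$ counts size-$n$ matchings having $k$ occurrences of $\tau$ and $m$ occurrences of $\pi$, which is the same quantity as $a_{n,m,k}(\pi,\tau)$. Combining these, $a_{n,k,m}(\pi,\tau) = a_{n,m,k}(\pi,\tau)$, i.e.\ the joint distribution of the occurrence counts of $\pi$ and $\tau$ is symmetric under the exchange $k \leftrightarrow m$. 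Specializing to $p = 2$, where $\pi = \ponetwo$ and $\tau = \ptwoone$, then yields the stated equality: the number of size-$n$ matchings with $k$ copies of \ptwoone\ and $m$ copies of \ponetwo\ equals the number with $m$ copies of \ptwoone\ and $k$ copies of \ponetwo.

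I expect no genuine obstacle, since the substantive content is already carried by the bijection underlying Lemma~\ref{lemma:etwist}. The only points demanding care are the observation that complement and reverse collapse to the same pattern $\tau$ for the identity (so that a single global twist simultaneously converts every occurrence of $\pi$ into one of $\tau$ and conversely), and the careful bookkeeping of the index swap $k \leftrightarrow m$ when passing from $a_{n,k,m}(\tau,\pi)$ to $a_{n,m,k}(\pi,\tau)$.
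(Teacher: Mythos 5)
Your proposal is correct and follows exactly the route the paper intends: the corollary is stated without proof precisely because it is the instance of Lemma~\ref{lemma:etwist} for $\pi=1\ldots p$ and $\tau=p\ldots1$, which, as you verify, are exchanged by both twists. Your additional bookkeeping step identifying $a_{n,k,m}(\tau,\pi)$ with $a_{n,m,k}(\pi,\tau)$ is the right way to make the word ``symmetric'' precise.
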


\begin{remark}
  Corollary~\ref{cor:12_21} is consistent with the result of Chen,
  Deng, Du, Stanley, and Yan~\cite{cddsy} who showed that, in the case
  of classical patterns, the joint distribution of crossings and
  nestings is symmetric.
\end{remark}

\subsection{Enumeration of endhered patterns of size 2}
\label{subsection:patterns2}

The goal of this Subsection is to study the distributions of endhered patterns of size 2.
It follows from Lemma~\ref{lemma:etwist} that patterns \ptwoone\ and \ponetwo\ have the same distribution, so we need only to enumerate one of them.
For simplicity, we will write $a_{n,k}$ instead of $a_{n,k}(21)$ throughout the rest of the paper.
Note that some of results presented here (Lemma~\ref{l12}, Corollary~\ref{cor_zeroth_row}) were proved by Baril~\cite{baril}
using the language of permutations and involutions.
For the sake of completeness, we provide new versions of these
proofs using the language of matchings.

\begin{thm}
  For $n\ge1$, the number of size-$n$ matchings containing exactly $k$ occurrences  of pattern \ptwoone\ (resp. \ponetwo) satisfies
  \begin{equation}
    \left\{
    \begin{array}{lll}
      a_{n+1,k} & = & a_{n,k-1} + 2(n-k) \cdot a_{n,k} + 2(k+1) \cdot   a_{n,k+1}\\
      a_{1,0} & = & 1\\
      a_{1,k} & = & 0, \quad k \ne 0.
    \end{array}
    \right.
    \label{eq:12}
  \end{equation}
  Formally, we allow negative values of $k$. It follows from~\eqref{eq:12} that $a_{n,k} = 0$ whenever $k<0$.
  \label{theorem:12}
\end{thm}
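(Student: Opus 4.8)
The plan is to establish the recurrence via the recursive construction of matchings described earlier in the paper, where every matching of size $n+1$ is obtained uniquely from a matching of size $n$ by inserting a new arc whose starting point is placed to the left of all existing $2n$ points and whose ending point is inserted into one of the $2n+1$ available gaps. The strategy is to fix a matching $\mu$ of size $n$ with exactly $k$ occurrences of \ptwoone\ and to track how the number of occurrences changes as we form the $2n+1$ matchings of size $n+1$ that arise from $\mu$. Summing over all such $\mu$ will yield the three terms on the right-hand side of~\eqref{eq:12}, so the core of the argument is a careful case analysis of the insertion.

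First I would set up notation: the new arc has its start at the new leftmost position, so it becomes the globally first starting point, and we record which of the $2n+1$ gaps receives its endpoint. The key observation is that an occurrence of \ptwoone\ is a pair of arcs whose two starting points are consecutive and whose two ending points are consecutive, arranged as a nesting. I would classify the $2n+1$ insertion positions by what happens to the pattern count. Inserting the new endpoint immediately after the new starting point (or in the analogous extremal gap) creates a fresh \ptwoone\ occurrence together with the arc whose starting point was previously leftmost, which accounts for the $a_{n,k-1}$ term: a matching with $k-1$ occurrences becomes one with $k$. The remaining positions must be sorted into those that leave the count unchanged, contributing the $2(n-k)\cdot a_{n,k}$ term, and those that destroy one existing occurrence, contributing the $2(k+1)\cdot a_{n,k+1}$ term.

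The main obstacle, and the step demanding the most care, is the precise bookkeeping of the coefficients $2(n-k)$ and $2(k+1)$. The factor of $2$ reflects that each arc contributes both a starting point and an ending point, so each of the relevant ``slots'' around the $n$ existing arcs comes in pairs; the challenge is to argue bijectively that among the insertion gaps exactly $2(k+1)$ of them (counted against matchings with $k+1$ occurrences) correspond to breaking a consecutive pair, while $2(n-k)$ gaps preserve the count against matchings with $k$ occurrences. I would make this rigorous by observing that inserting the new endpoint between two consecutive ending points of an existing \ptwoone\ occurrence splits that pair and removes the occurrence, and that the total number of gaps is balanced so that $1 + 2(n-k) + 2(k+1) = 2n+3$ is reconciled with $2n+1$ by noting the overlap in how gaps adjacent to a newly created pattern are counted across the $a_{n,k+1}$ and $a_{n,k}$ contributions. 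Verifying that this accounting is exactly consistent, with no gap counted twice and none omitted, is where the proof truly lives.

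Finally I would check the base case directly: a matching of size $1$ is a single arc, which contains no pair of arcs and hence zero occurrences of \ptwoone, giving $a_{1,0}=1$ and $a_{1,k}=0$ for $k\neq 0$. The claim that $a_{n,k}=0$ for $k<0$ then follows by induction from the recurrence, since the only term that could reference a negative index is $a_{n,k-1}$, and with $a_{1,k}=0$ for all $k<0$ as the base, the recurrence propagates this vanishing upward. By Lemma~\ref{lemma:etwist}, the identical recurrence holds for \ponetwo, so it suffices to have carried out the analysis for \ptwoone\ alone.
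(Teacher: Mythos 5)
Your overall strategy coincides with the paper's: build each size-$(n+1)$ matching by prepending a new leftmost arc to a size-$n$ matching and classify the $2n+1$ possible positions of its right endpoint according to whether the count of \ptwoone\ goes up by one, down by one, or is unchanged. But the execution has a genuine gap at exactly the step you flag as ``where the proof truly lives.'' To begin with, the creating position is misidentified: placing the new endpoint ``immediately after the new starting point'' produces the arc $(1,2)$, which cannot participate in any \ptwoone\ (for it to be the outer, resp.\ inner, arc of a nesting with consecutive endpoints one would need an arc $(2,1)$, resp.\ $(0,3)$). The unique creating position is immediately after the right endpoint of the previously leftmost arc: the new arc then nests over that arc, with starting points $1,2$ consecutive and ending points consecutive.

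More seriously, the coefficient bookkeeping is wrong in principle. You propose to reconcile $1+2(n-k)+2(k+1)=2n+3$ with the $2n+1$ available gaps by invoking an ``overlap'' between the $a_{n,k}$ and $a_{n,k+1}$ contributions. There is no overlap and nothing to reconcile: the three coefficients count gaps in three \emph{different} source matchings (those with $k-1$, $k+1$, and $k$ occurrences, respectively), so they have no reason to sum to $2n+1$. The correct consistency check is per source matching: a size-$n$ matching with $j$ occurrences of \ptwoone\ has exactly $1$ creating gap, $2j$ destroying gaps (two per occurrence --- one between its two consecutive starting points and one between its two consecutive ending points), and therefore $2n+1-(2j+1)=2(n-j)$ neutral gaps; applying this with $j=k-1$, $j=k+1$ and $j=k$ yields the three terms of~\eqref{eq:12}. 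As written, your argument does not actually establish the coefficients $2(n-k)$ and $2(k+1)$. (Your handling of the base case and of $a_{n,k}=0$ for $k<0$ is fine, noting that for $k=-1$ the term involving $a_{n,0}$ carries the coefficient $2(k+1)=0$.)
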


\begin{proof}
  The boundary values corresponding to $n=1$ are easily obtained.
  To construct a matching of size $n+1$ having $k$ occurrence of
  \ptwoone\,, we take a non-empty matching $\mu$ of size $n$ and add a
  new arc $E$ together with its two ending points (left and right).
  We always put the left point of $E$ before the first point of $\mu$.
  For the right point of $E$, there are three different cases, since
  the new arc either creates a new pattern, destroys an existing
  pattern, or leaves the number of occurrences of the pattern unchanged.
  \begin{enumerate}
  \item 
    The right point of $E$ is placed just after the right point of the arc $(1,i)$ of $\mu$ (see Figure~\ref{fig:thm1fig1}).
    In this case, the number of occurrences increases by one.
    Hence, the matching $\mu$ must possess $k-1$ patterns \ptwoone\,, so that the resulting matching has $k$ occurrences of this pattern.
    This gives us the first term in relation~\eqref{eq:12}.
    \begin{figure}[h]
      \centering
      \includegraphics[width=0.4\textwidth]{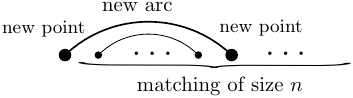}
      \caption{Adding a new arc creates a new pattern.}
      \label{fig:thm1fig1}
    \end{figure}

  \item 
    The right end of $E$ is put between the left (resp. right) ends of any two arcs that make up the pattern \ptwoone\,.
    In this case, one of the existing patterns is destroyed, and the number of occurrences decreases by one (see  Figure~\ref{fig:thm1fig3}).
    Therefore, to have $k$ occurrences in the resulting matching, the initial matching $\mu$ must contain $k+1$ occurrences of \ptwoone\,.
    Since there are $2(k+1)$ destructive positions, this gives us the last term in relation~\eqref{eq:12}.
    \begin{figure}[h]
      \centering
      \includegraphics[width=0.6\textwidth]{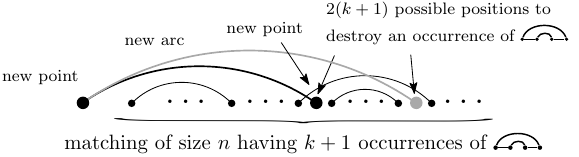}
      \caption{Adding a new arc destroys an existing pattern.}
      \label{fig:thm1fig3}
    \end{figure}

  \item
    The right point of $E$ does not create, nor remove any pattern \ptwoone\,. 
    In this case, the matching $\mu$ must possess $k$ occurrences of this pattern.
    There are $2n+1$ possible positions for the right end of $E$, but $2k+1$ of them are forbidden.
    Indeed, there are $k$ already existing occurrence of \ptwoone\ that we do not want to destroy,
    and one more position is banned because we are not allowed to create any new \ptwoone\ in this case (see  Figure~\ref{fig:thm1fig2}).
    \begin{figure}[h]
      \centering
      \includegraphics[width=0.6\textwidth]{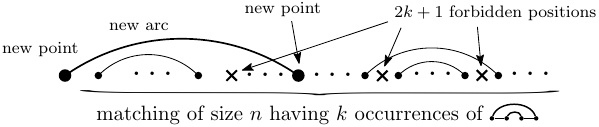}
      \caption{Adding a new arc does not change the number of pattern occurrences.}
      \label{fig:thm1fig2}
    \end{figure}
    
  \end{enumerate}
  The distribution of the pattern \ponetwo\ respects the same recurrence. To establish the latter fact, it is sufficient to
  twist the corresponding ends using Lemma~\ref{lemma:etwist}.
\end{proof}

Table~\ref{table:12} shows first values of $a_{n,k}$.
With this table and those that follow in the text, we can see the
big picture of the initial terms, intuitively understand how they
grow, formulate hypotheses about the global (asymptotic) behaviour
of these numerical sequences.  We present our results in this
direction.

\begin{table}[ht]
  \small
  \centering
  \begin{tabular}{r|ccccccccc|l}
\diagbox[width=2.5em,height=2.5em]{k}{n} &  1 & 2 & 3 & 4 & 5 & 6 & 7 & 8 & 9  & OEIS \\ \hline
0 & 1 & 2 & 10 & 68 & 604 & 6584 & 85048 & 1269680 & 21505552 & \oeis{A165968} \\
1 & 0 & 1 & 4 & 30 & 272 & 3020 & 39504 & 595336 & 10157440 &  \oeis{A179540} \\
2 & 0 & 0 & 1 & 6 & 60 & 680 & 9060 & 138264 & 2381344 & \\
3 & 0 & 0 & 0 & 1 & 8 & 100 & 1360 & 21140 & 368704 & \\
4 & 0 & 0 & 0 & 0 & 1 & 10 & 150 & 2380 & 42280 & \\
5 & 0 & 0 & 0 & 0 & 0 & 1 & 12 & 210 & 3808 & \\
6 & 0 & 0 & 0 & 0 & 0 & 0 & 1 & 14 & 280 & \\
7 & 0 & 0 & 0 & 0 & 0 & 0 & 0 & 1 & 16 & \\
8 & 0 & 0 & 0 & 0 & 0 & 0 & 0 & 0 & 1 & \\
\end{tabular}
  \caption{Distribution of pattern \protect\ptwoone\ (21).}
\label{table:12}
\end{table}

\bigskip

\begin{lemma}
  For any $n>k\geq0$,
  \begin{equation}
    a_{n,k} = \binom{n-1}{k}a_{n-k,0}.
    \label{eq:l12}
  \end{equation}
  \label{l12}
\end{lemma}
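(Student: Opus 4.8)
The plan is to establish~\eqref{eq:l12} by induction on $n$, using the three-term recurrence~\eqref{eq:12} from Theorem~\ref{theorem:12} as the engine and the claimed closed form as the induction hypothesis. For the base case $n=1$ only $k=0$ is admissible, and there~\eqref{eq:l12} reads $a_{1,0}=\binom{0}{0}a_{1,0}$, trivially true. For the inductive step I assume $a_{m,\ell}=\binom{m-1}{\ell}a_{m-\ell,0}$ for every size $m\le n$ and every admissible $\ell$, and I aim to deduce the same identity at size $n+1$.

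Fix $k$ with $1\le k\le n-1$; the boundaries are cheap, since $k=0$ is immediate and for $k=n$ the recurrence collapses to $a_{n+1,n}=a_{n,n-1}$, which the hypothesis evaluates to $1$. Substituting the induction hypothesis for $a_{n,k-1}$, $a_{n,k}$ and $a_{n,k+1}$ into~\eqref{eq:12} expresses $a_{n+1,k}$ as a linear combination of the three zeroth-row entries $a_{n-k+1,0}$, $a_{n-k,0}$ and $a_{n-k-1,0}$. Writing $m=n-k$ and subtracting the target $\binom{n}{k}a_{m+1,0}$, Pascal's rule $\binom{n-1}{k-1}+\binom{n-1}{k}=\binom{n}{k}$ turns the coefficient of $a_{m+1,0}$ into $-\binom{n-1}{k}$, so the difference $a_{n+1,k}-\binom{n}{k}a_{m+1,0}$ takes the form $\binom{n-1}{k}\bigl(-a_{m+1,0}+2m\,a_{m,0}\bigr)+2(k+1)\binom{n-1}{k+1}a_{m-1,0}$.

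To make this vanish I need the two-term recurrence for the zeroth row, $a_{m+1,0}=2m\,a_{m,0}+2(m-1)a_{m-1,0}$, which rewrites the first bracket as $-2(m-1)a_{m-1,0}$; the remaining expression then collapses by the absorption identity $(k+1)\binom{n-1}{k+1}=(n-1-k)\binom{n-1}{k}=(m-1)\binom{n-1}{k}$. This zeroth-row recurrence is the step I expect to be the main obstacle, because at first glance it seems to have to be imported from outside (it is essentially Corollary~\ref{cor_zeroth_row}) and could introduce a circularity. The resolution is to recognise it as the $k=0$ instance of~\eqref{eq:12}, namely $a_{m+1,0}=2m\,a_{m,0}+2a_{m,1}$, combined with the $k=1$ instance of the very formula being proved, $a_{m,1}=\binom{m-1}{1}a_{m-1,0}=(m-1)a_{m-1,0}$. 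Since $m=n-k\le n-1<n+1$, that $k=1$ instance lives at a strictly smaller size and is therefore already granted by the induction hypothesis, so no circular reasoning occurs.

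Finally I would audit the degenerate ranges so that no boundary term is misused: the case $k+1=n$, where $a_{n,k+1}=0$ and $\binom{n-1}{k+1}=0$ agree, and the case $m=1$, where the coefficient $2(m-1)$ annihilates the otherwise-undefined $a_{0,0}$. As a sanity check and an alternative route, I note that~\eqref{eq:l12} is equivalent to the telescoping relation $k\,a_{n,k}=(n-1)a_{n-1,k-1}$; iterating this from $(n,k)$ down to $(n-k,0)$ reproduces $\binom{n-1}{k}a_{n-k,0}$. One could try to prove the two-term relation directly instead, but deriving it from~\eqref{eq:12} seems to require the hypothesis at two consecutive sizes and is messier than the argument above.
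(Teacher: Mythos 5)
Your proof is correct, but it takes a genuinely different route from the paper's. The paper proves Lemma~\ref{l12} bijectively: every size-$n$ matching with $k$ occurrences of \ptwoone\ arises uniquely from a size-$(n-k)$ matching avoiding \ptwoone\ by expanding some of its arcs into multiple arcs, and the stars-and-bars count of the ways to distribute $k$ new arcs over $n-k$ old ones is $\binom{(n-k)+k-1}{k}=\binom{n-1}{k}$. You instead run an induction on $n$ driven by the three-term recurrence~\eqref{eq:12}, reducing the inductive step to Pascal's rule, the absorption identity $(k+1)\binom{n-1}{k+1}=(n-1-k)\binom{n-1}{k}$, and the two-term zeroth-row recurrence~\eqref{eq_zeroth_row}; I checked the cancellation and it goes through. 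The delicate point is exactly the one you flag: in the paper Corollary~\ref{cor_zeroth_row} is deduced \emph{from} Lemma~\ref{l12}, so citing it outright would be circular, and your workaround --- obtaining $a_{m+1,0}=2m\,a_{m,0}+2a_{m,1}$ from the $k=0$ instance of~\eqref{eq:12} and then evaluating $a_{m,1}=(m-1)a_{m-1,0}$ via the induction hypothesis at the strictly smaller size $m=n-k\le n-1$ --- is sound, as is your audit of the boundary cases $k=0$, $k=n$, $k+1=n$ and $m=1$. What each approach buys: the paper's bijection \emph{explains} the formula and is the template reused in Theorem~\ref{theorem:321}, whereas your induction is purely mechanical but self-contained given Theorem~\ref{theorem:12}, makes the logical dependencies explicit, and produces the zeroth-row recurrence of Corollary~\ref{cor_zeroth_row} as a by-product of the induction rather than as a downstream consequence.
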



\begin{proof}
  Each matching of size $n$ with $k$ occurrences of \ptwoone\ can be uniquely obtained by the following procedure. 
  Take a matching of size $n-k$ avoiding \ptwoone\ pattern.
  Add $k$ new arcs expanding some of the original arcs into double, triple, $\ldots$, and $(k+1)$-uple arcs.
  Since each $t$-uple arc contains $t-1$ occurrences of \ptwoone\,, we have $k$ occurrences of \ptwoone\ in total (see
  Figure~\ref{fig.expand1}).
  Therefore, relation~\eqref{eq:l12} comes from the classical stars-and-bars argument~\cite{feller}:
  the binomial coefficient $\binom{n-1}{k}$ is the number of ways to add $k$ new arcs by expanding some of the $n-k$ original arcs.
  
  \begin{figure}[ht]
    \centering
    \includegraphics[width=0.6\textwidth]{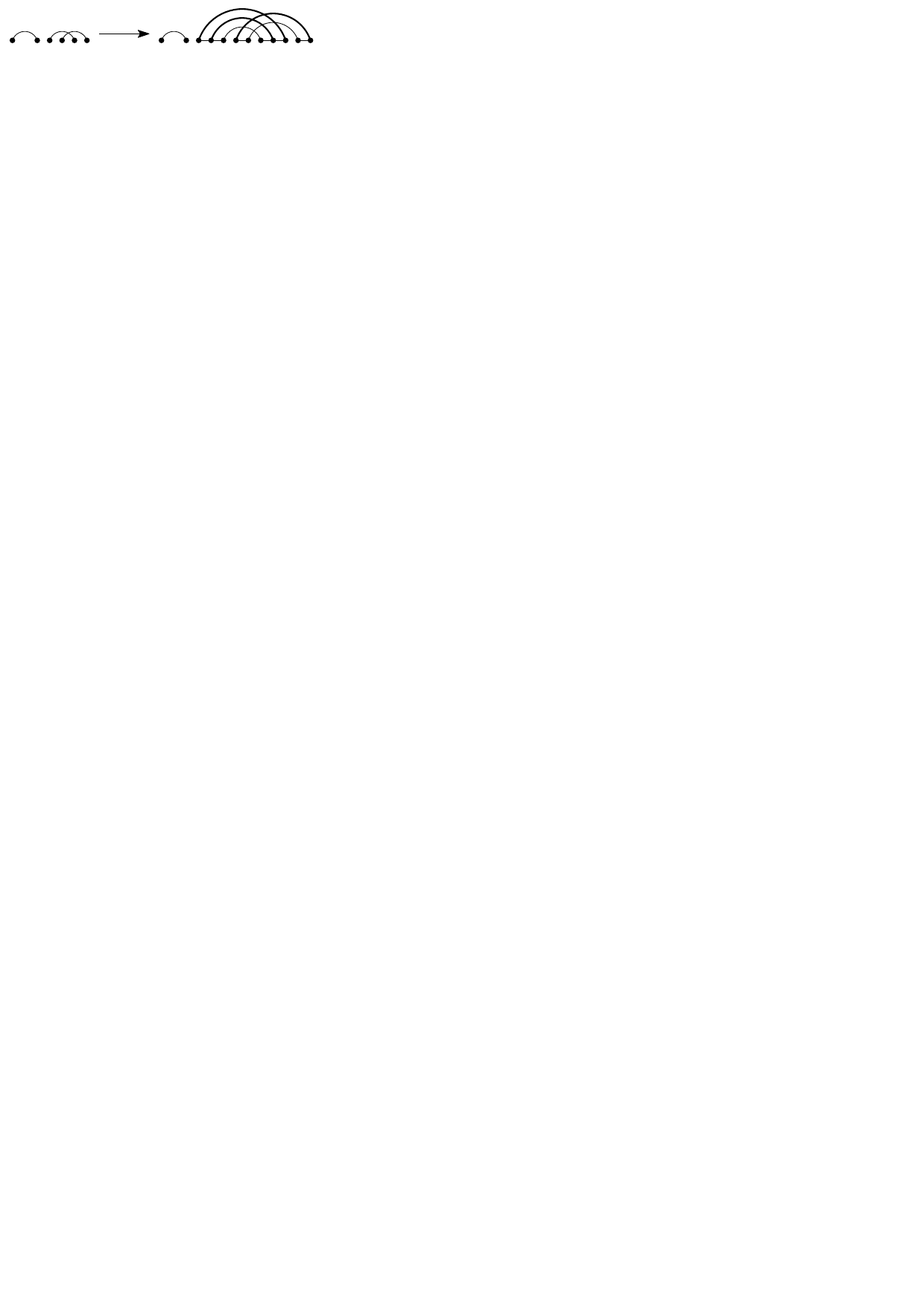}
    \caption{Arc expansion. Adding 3 new arcs, we create 3 new occurrences of \protect\ptwoone\,.}
    \label{fig.expand1}
  \end{figure}

\end{proof}

\begin{cor}
  The sequence $(a_{n,0})$ satisfies the recurrence relation
  \begin{equation}
    a_{n+1,0} = 2na_{n,0} + 2(n-1)a_{n-1,0}
    \label{eq_zeroth_row}
  \end{equation}
  with the initial conditions
  $$
    a_{1,0} = 1
    \qquad\mbox{and}\qquad
    a_{2,0} = 2.
  $$
  In particular, this sequence corresponds to a shift of \oeis{A165968} in OEIS~\cite{oeis}.
  \label{cor_zeroth_row}
\end{cor}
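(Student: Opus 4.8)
The plan is to collapse the two-parameter recurrence~\eqref{eq:12} into a relation on the zeroth row alone, by specialising to $k=0$ and then eliminating the single surviving off-diagonal term with the help of Lemma~\ref{l12}. Setting $k=0$ in~\eqref{eq:12} kills the term $a_{n,k-1}=a_{n,-1}$, which vanishes under the convention that $a_{n,k}=0$ for $k<0$, and leaves
\[
  a_{n+1,0} = 2n\,a_{n,0} + 2\,a_{n,1}.
\]
Thus the entire task reduces to expressing $a_{n,1}$ through earlier zeroth-row entries.

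The key step is to invoke Lemma~\ref{l12} with $k=1$, which gives $a_{n,1}=\binom{n-1}{1}a_{n-1,0}=(n-1)\,a_{n-1,0}$. Substituting this into the displayed identity immediately yields the desired relation $a_{n+1,0}=2n\,a_{n,0}+2(n-1)\,a_{n-1,0}$. For the initial conditions, the value $a_{1,0}=1$ is one of the boundary conditions already recorded in Theorem~\ref{theorem:12}, while $a_{2,0}$ can be read off Table~\ref{table:12} or obtained directly from~\eqref{eq:12} at $n=1$, $k=0$, namely $a_{2,0}=2\cdot1\cdot a_{1,0}+2\,a_{1,1}=2$ since $a_{1,1}=0$.

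The only point demanding a little care---and the nearest thing to an obstacle in an otherwise one-line argument---is the range of validity: Lemma~\ref{l12} requires $n>k$, so the substitution for $a_{n,1}$ is licensed only for $n\geq2$, and the edge case $n=1$ must be inspected separately. This turns out to be harmless, since at $n=1$ the coefficient $2(n-1)$ vanishes and the recurrence degenerates to $a_{2,0}=2\,a_{1,0}$, consistent with the initial data and requiring no value for $a_{0,0}$. Once the recurrence and the two initial values are in hand, the identification with a shift of \oeis{A165968} follows at once by comparing the first computed terms.
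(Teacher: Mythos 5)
Your proposal is correct and matches the paper's own argument, which simply states that the recurrence follows from relations~\eqref{eq:12} and~\eqref{eq:l12}: you specialise the recurrence of Theorem~\ref{theorem:12} to $k=0$ and eliminate $a_{n,1}$ via Lemma~\ref{l12}, exactly as intended. Your additional care about the range $n>k$ in Lemma~\ref{l12} and the degenerate case $n=1$ is a welcome bit of extra rigour but does not change the substance.
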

\begin{proof}
  Recurrence~\eqref{eq_zeroth_row} follows immediately from relations~\eqref{eq:12} and~\eqref{eq:l12}.
\end{proof}

\begin{remark}
  There is a constructive combinatorial explanation of
  relation~\eqref{eq_zeroth_row} (see also the work of
  Baril~\cite{baril}).  Indeed, if we remove the first arc in a
  matching of size $n+1$ avoiding \ptwoone\,, then we obtain a matching
  of size $n$ that either avoids \ptwoone\ or contains exactly one
  pattern \ptwoone\,.  This observation shows that any matching of size
  $n+1$ avoiding \ptwoone\ can be uniquely constructed in the
  following way.  Either we take a matching $\mu$ of size $n$ avoiding
  \ptwoone\ and add a new arc starting at the left of the already
  existing $2n$ points of $\mu$ and ending at some of $2n$ possible
  new positions (it cannot arrive just after the right point of the
  first arc of $\mu$).  Or we take a matching of size $n-1$
  avoiding \ptwoone\,, double one of its $n-1$ arcs, and then destroy
  the just created pattern \ptwoone\ by a new arc drawn in one of two
  possible ways (see Figure~\ref{fig:21-rec}).

  \begin{figure}[h]
  \begin{center}
    \includegraphics{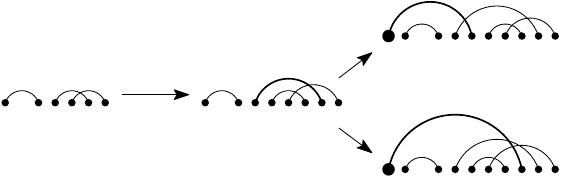}
  \end{center}
  \caption{Two possible ways for destroying the newly created
    pattern \protect\ptwoone.}
  \label{fig:21-rec}
  \end{figure}
    
  \label{rem_zeroth_row}
\end{remark}

\begin{cor}
  The sequence $(a_{n,1})$ satisfies the recurrence relation
  \begin{equation}
    a_{n+1,1} = 2n(a_{n,1} + a_{n-1,1})
    \label{eq_first_row}
  \end{equation}
  with the initial conditions
  $$
    a_{1,1} = 0
    \qquad\mbox{and}\qquad
    a_{2,1} = 1.
  $$
  In particular, this sequence corresponds to a shift of \oeis{A179540} in OEIS~\cite{oeis}.
  \label{cor_first_row}
\end{cor}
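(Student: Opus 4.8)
The plan is to follow the same strategy as the proof of Corollary~\ref{cor_zeroth_row}: instead of re-running the arc-insertion argument of Theorem~\ref{theorem:12}, I would derive the first-row recurrence \eqref{eq_first_row} purely algebraically, by combining Lemma~\ref{l12} with the already-established zeroth-row recurrence \eqref{eq_zeroth_row}. The crucial observation is that Lemma~\ref{l12} expresses every entry of the first row in terms of the zeroth row, so that a statement about $(a_{n,1})$ can be translated into one about $(a_{n,0})$.

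First I would specialize \eqref{eq:l12} to $k=1$, which gives, for every $n\ge 1$,
\begin{equation}
  a_{n,1} = \binom{n-1}{1}a_{n-1,0} = (n-1)\,a_{n-1,0},
  \label{eq:row1_via_row0}
\end{equation}
where we read $a_{0,0}=1$ so that the formula also yields $a_{1,1}=0$. Substituting \eqref{eq:row1_via_row0} into the three first-row terms appearing in \eqref{eq_first_row}, the left-hand side becomes $a_{n+1,1}=n\,a_{n,0}$, while the right-hand side becomes $2n\bigl[(n-1)a_{n-1,0}+(n-2)a_{n-2,0}\bigr]$. Thus \eqref{eq_first_row} is equivalent, after cancelling the common factor $n$, to
\begin{equation}
  a_{n,0} = 2(n-1)\,a_{n-1,0} + 2(n-2)\,a_{n-2,0},
  \label{eq:row0_shifted}
\end{equation}
which is exactly the zeroth-row recurrence \eqref{eq_zeroth_row} with the index shifted from $n+1$ to $n$. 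Hence \eqref{eq_first_row} holds whenever \eqref{eq_zeroth_row} does.

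It then remains to check the initial data. The values $a_{1,1}=0$ and $a_{2,1}=1$ follow from the boundary conditions of Theorem~\ref{theorem:12} (for $n=1$) and from \eqref{eq:row1_via_row0} (giving $a_{2,1}=1\cdot a_{1,0}=1$), and agree with Table~\ref{table:12}. The identification with a shift of \oeis{A179540} is immediate once the recurrence and the two initial terms are pinned down.

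I do not expect a genuine obstacle here; the only point requiring care is the bookkeeping of index ranges. Lemma~\ref{l12} is stated for $n>k$, so the substitution $a_{n-1,1}=(n-2)a_{n-2,0}$ is literally licensed only for $n\ge 3$, and the cases $n=1,2$ must be absorbed into the initial conditions (equivalently, handled by the convention $a_{0,0}=1$ that makes \eqref{eq:row1_via_row0} valid down to $n=1$). Once this is settled, the proof reduces, as for Corollary~\ref{cor_zeroth_row}, to the single remark that \eqref{eq_first_row} follows immediately from \eqref{eq:l12} and \eqref{eq_zeroth_row}.
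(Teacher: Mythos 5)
Your proposal is correct and follows essentially the same route as the paper: the paper's one-line proof also derives \eqref{eq_first_row} algebraically by writing $a_{n+1,1}=na_{n,0}$ via Lemma~\ref{l12} and then expanding $a_{n,0}$ by the zeroth-row information. The only cosmetic difference is that the paper substitutes the $k=0$ case of \eqref{eq:12} directly (keeping the term $2a_{n-1,1}$), whereas you route through the already-derived Corollary~\ref{cor_zeroth_row}; the two computations are interchangeable.
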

\begin{proof}
  Apply relations~\eqref{eq:12} and~\eqref{eq:l12}:
  $$
    a_{n+1,1} = na_{n,0} =
    n\Big(2(n-1)a_{n-1,0} + 2a_{n-1,1}\Big) =
    2n(a_{n,1} + a_{n-1,1}).
  $$
\end{proof}

\begin{lemma}
  For any $n\geq0$,
  \begin{equation}
    a_{n+1,0} = \sum\limits_{k=0}^n (-1)^{n-k}\binom{n}{k}(2k+1)!!,
    \label{eq_zeroth_row_2}
  \end{equation}
  where $m!!$ denotes the {\em double factorial} of $m$,
  $$m!! = 
\begin{cases}
  m(m-2)(m-4) \cdots 4 \cdot 2  & \text{if } m \text{ is even}, \\
  m(m-2)(m-4) \cdots 3 \cdot 1 & \text{if } m \text{ is odd}.
\end{cases}
$$
  \label{lemma:zeroth_row}
\end{lemma}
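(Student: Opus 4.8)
The plan is to combine Lemma~\ref{l12} with the elementary fact that the total number of matchings of size $m$ is $(2m-1)!!$, and then to recognize and invert a binomial transform. The starting observation is that every matching of size $m$ contains some number $k\geq0$ of occurrences of \ptwoone\,, so summing $a_{m,k}$ over all $k$ must recover the full count of matchings. Since a matching of size $m$ contains at most $m-1$ occurrences (the maximum being realized by a single $m$-uple nested arc), we have $a_{m,k}=0$ for $k\geq m$, and the sum is finite:
\[
  \sum_{k=0}^{m-1} a_{m,k} = (2m-1)!!.
\]

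First I would substitute relation~\eqref{eq:l12}, $a_{m,k}=\binom{m-1}{k}a_{m-k,0}$, valid for $0\leq k<m$, to turn this into
\[
  \sum_{k=0}^{m-1}\binom{m-1}{k}a_{m-k,0} = (2m-1)!!.
\]
Next I would reindex by $i=m-1-k$, using the symmetry $\binom{m-1}{k}=\binom{m-1}{m-1-k}$ together with $m-k=i+1$, and then set $m=n+1$. This rewrites the left-hand side as $\sum_{i=0}^{n}\binom{n}{i}a_{i+1,0}$, yielding
\[
  (2n+1)!! = \sum_{i=0}^{n}\binom{n}{i}\,a_{i+1,0}.
\]
Thus the sequence $\bigl((2n+1)!!\bigr)_{n\geq0}$ is exactly the binomial transform of the sequence $\bigl(a_{n+1,0}\bigr)_{n\geq0}$.

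Finally I would apply the standard binomial inversion: if $g(n)=\sum_{i=0}^{n}\binom{n}{i}f(i)$ holds for all $n\geq0$, then $f(n)=\sum_{i=0}^{n}(-1)^{n-i}\binom{n}{i}g(i)$. Taking $f(i)=a_{i+1,0}$ and $g(i)=(2i+1)!!$ produces precisely formula~\eqref{eq_zeroth_row_2}. I do not anticipate a serious obstacle: the inversion itself is a classical identity, so the only points demanding care are justifying that $a_{m,k}=0$ for $k\geq m$ (so that the summation over $k$ is finite and genuinely equals $(2m-1)!!$) and the clerical bookkeeping in the reindexing step. As a consistency check, the values $n=0,1,2$ give $a_{1,0}=1$, $a_{2,0}=-1+3=2$, and $a_{3,0}=1-6+15=10$, in agreement with Table~\ref{table:12}. (An alternative route would verify~\eqref{eq_zeroth_row_2} directly by induction against the recurrence~\eqref{eq_zeroth_row}, but the binomial-transform argument above is both shorter and more conceptual.)
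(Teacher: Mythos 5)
Your argument is correct, but it takes a genuinely different route from the paper's. The paper proves \eqref{eq_zeroth_row_2} by a direct inclusion--exclusion over the events ``the $i$-th arc is the outer arc of an occurrence of \ptwoone\,'': contracting a forced double arc shows each such event has $(2n-1)!!$ matchings, each pair of events $(2n-3)!!$, and so on, giving the alternating sum. You instead start from the trivial identity $\sum_{k}a_{m,k}=(2m-1)!!$, feed in Lemma~\ref{l12} to get $\sum_{i=0}^{n}\binom{n}{i}a_{i+1,0}=(2n+1)!!$ after reindexing, and then apply binomial inversion. Your reindexing and the boundedness claim $a_{m,k}=0$ for $k\geq m$ both check out (the arc through point $1$ can never be the inner arc of an occurrence, so a size-$m$ matching has at most $m-1$ occurrences, attained by the fully nested matching), and there is no circularity since Lemma~\ref{l12} is established independently by the arc-expansion bijection. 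The two proofs are of course dual --- binomial inversion \emph{is} inclusion--exclusion in disguise --- but yours buys a more mechanical derivation that reuses the structural decomposition of Lemma~\ref{l12} and avoids having to argue directly about which configurations are overcounted, while the paper's version is self-contained and makes the combinatorial meaning of each alternating term explicit. Either is acceptable.
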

\begin{proof}
  Being rewritten as
  $$
    a_{n+1,0} = \sum\limits_{k=0}^n (-1)^{k}\binom{n}{k}\big(2(n-k)+1\big)!!,
  $$
  relation~\eqref{eq_zeroth_row_2} can be proved with the help of the inclusion-exclusion principle.
  Indeed, there are $(2n+1)!!$ matchings of size $n+1$.
  In $(2n-1)!!$ among them, $i$-th arc belongs to the pattern \ptwoone\ as an outer arc, $i=1,\ldots,n$.
  Hence, to get the number of pattern avoiding matchings, we need to deduce $\binom{n}{1}(2n-1)!!$ from $(2n+1)!!$.
  However, matchings with two occurrences of \ptwoone\ are deduced twice.
  There are $(2n-3)!!$ matchings with $i$-th and $j$-th arcs belonging to the pattern \ptwoone\ as outer arcs, and $\binom{n}{2}$ possible pairs $(i,j)$, so we add $\binom{n}{2}(2n-3)!!$.
  The same way, we treat matchings with three occurrences of \ptwoone\,, with four occurrences and so on.

\end{proof}

For any $n,k\geq0$, let us denote
$$
  b_{n,k} := a_{n+1,k}.
$$
Define also
$$
  B(z,u) :=   \sum\limits_{n=0}^{\infty}
    \sum\limits_{k=0}^n
      b_{n,k} \dfrac{z^n}{n!} u^k.
$$

\begin{lemma}
  The exponential generating function $B(z,u)$ satisfies
  \begin{equation}
    B(z,u) = \dfrac{e^{z(u-1)}}{\sqrt{(1-2z)^3}}.
    \label{eq:egf12}
  \end{equation}
    In particular, the exponential generating function of the shifted
    $k$-th row of Table~\ref{table:12} is
  $$
    [u^k]B(z,u) = \dfrac{z^k}{k!} \cdot \dfrac{e^{-z}}{\sqrt{(1-2z)^3}}.  
  $$
  \label{lemma:egf12}
\end{lemma}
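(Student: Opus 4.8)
The plan is to exploit the product structure hidden in Lemma~\ref{l12} and thereby reduce the two-variable problem to a single ordinary differential equation for the zeroth row. First I would rewrite that factorization in terms of $b_{n,k}=a_{n+1,k}$. Since $k\le n$ throughout the range of summation, the strict inequality $n+1>k$ always holds, so Lemma~\ref{l12} applies and gives $b_{n,k}=\binom{n}{k}b_{n-k,0}$ for all $0\le k\le n$. Substituting this into the definition of $B(z,u)$ and using $\binom{n}{k}/n!=1/(k!(n-k)!)$, the double sum factors completely:
\[
B(z,u)=\sum_{n\ge0}\sum_{k=0}^{n}b_{n-k,0}\,\frac{(zu)^{k}}{k!}\,\frac{z^{\,n-k}}{(n-k)!}
=\sum_{k\ge0}\frac{(zu)^{k}}{k!}\sum_{m\ge0}b_{m,0}\frac{z^{m}}{m!}
=e^{zu}\,B_0(z),
\]
where $B_0(z):=\sum_{m\ge0}b_{m,0}z^{m}/m!=\sum_{m\ge0}a_{m+1,0}z^{m}/m!$ is the exponential generating function of the shifted zeroth row.

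This reduces the lemma to identifying $B_0$. I would translate the recurrence from Corollary~\ref{cor_zeroth_row}, rewritten as $b_{n+1,0}=2(n+1)b_{n,0}+2n\,b_{n-1,0}$, into a differential equation by recognizing $B_0'(z)=\sum_{n\ge0}b_{n+1,0}z^{n}/n!$. The term $2(n+1)b_{n,0}$ assembles into $2\,\partial_z\!\big(zB_0\big)=2B_0+2zB_0'$, while $2n\,b_{n-1,0}$ assembles into $2zB_0$, yielding the separable first-order equation
\[
(1-2z)\,B_0'(z)=2(1+z)\,B_0(z).
\]
Splitting $\tfrac{2(1+z)}{1-2z}=\tfrac{3}{1-2z}-1$ and integrating with the initial condition $B_0(0)=b_{0,0}=a_{1,0}=1$ gives $B_0(z)=e^{-z}(1-2z)^{-3/2}$. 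Substituting back into the factorization produces
\[
B(z,u)=e^{zu}\,e^{-z}(1-2z)^{-3/2}=\frac{e^{z(u-1)}}{\sqrt{(1-2z)^{3}}},
\]
which is exactly~\eqref{eq:egf12}. The row statement then follows immediately by extracting $[u^k]e^{zu}=z^{k}/k!$, so that $[u^k]B(z,u)=\tfrac{z^{k}}{k!}\,e^{-z}(1-2z)^{-3/2}$.

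The main obstacle is the bookkeeping in the generating-function translation: one must carefully track the $z\partial_z$ contributions coming from the factors $2(n+1)$ and $2n$, and confirm that the shifted recurrence for $b_{n+1,0}$ holds for every $n\ge0$ so that summation against $z^{n}/n!$ is legitimate and the single initial value $b_{0,0}=1$ suffices. As an alternative that avoids Lemma~\ref{l12}, one could instead derive directly from Theorem~\ref{theorem:12} the linear partial differential equation $(1-2z)\,\partial_z B-2(1-u)\,\partial_u B=(u+2)B$ and verify that the claimed closed form solves it with $B(0,u)=1$; I prefer the factorization route, since it reuses an already established result and isolates a clean one-variable ODE rather than a PDE.
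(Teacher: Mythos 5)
Your argument is correct, and the first half of it coincides with the paper's proof: both you and the authors use Lemma~\ref{l12} in the form $b_{n,k}=\binom{n}{k}b_{n-k,0}$ (valid for all $0\le k\le n$ since $n+1>k$) to factor the double sum as $B(z,u)=e^{zu}B(z,0)$. Where you diverge is in identifying $B(z,0)$. The paper reads it off directly from Lemma~\ref{lemma:zeroth_row}: the alternating sum $b_{n,0}=\sum_k(-1)^{n-k}\binom{n}{k}(2k+1)!!$ is a binomial convolution, so $B(z,0)$ is the product of $e^{-z}$ with the known series $\sum_n(2n+1)!!\,z^n/n!=(1-2z)^{-3/2}$ --- a one-line computation. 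You instead translate the three-term recurrence of Corollary~\ref{cor_zeroth_row} into the separable ODE $(1-2z)B_0'=2(1+z)B_0$ and integrate; your bookkeeping (the shift to $b_{n+1,0}=2(n+1)b_{n,0}+2nb_{n-1,0}$, the identification of $2\partial_z(zB_0)$ and $2zB_0$, the vanishing coefficient at $n=0$ that makes the single initial value $B_0(0)=1$ suffice, and the partial-fraction split $\tfrac{2(1+z)}{1-2z}=\tfrac{3}{1-2z}-1$) all checks out, and there is no circularity since Corollary~\ref{cor_zeroth_row} is derived from \eqref{eq:12} and \eqref{eq:l12} without reference to this lemma. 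The trade-off is that the paper's route exploits the explicit inclusion--exclusion formula and needs no calculus, while yours requires only the recurrence (not the closed-form sum) at the cost of solving a first-order ODE; your sketched PDE alternative from Theorem~\ref{theorem:12} is also a viable verification, though it only confirms rather than derives the closed form.
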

\begin{proof}
  Recall that
  $$
    \sum\limits_{n=0}^{\infty} (2n+1)!! \dfrac{z^n}{n!} =
    \dfrac{1}{\sqrt{(1-2z)^3}}.
  $$
  Therefore, according to Lemma~\ref{lemma:zeroth_row}, we have
  \begin{align*}
    B(z,0)
    &=
    \sum\limits_{n=0}^{\infty} b_{n,0} \dfrac{z^n}{n!}
    =
    \sum\limits_{n=0}^{\infty} \dfrac{z^n}{n!}
      \sum\limits_{k=0}^n (-1)^{n-k}\binom{n}{k}(2k+1)!! \\
    &=
    \left(
      \sum\limits_{n=0}^{\infty} \dfrac{(-z)^n}{n!}
    \right)
    \left(
      \sum\limits_{n=0}^{\infty} (2n+1)!! \dfrac{z^n}{n!}
    \right)
    =
    \dfrac{e^{-z}}{\sqrt{(1-2z)^3}}.
  \end{align*}
  Hence, due to Lemma~\ref{l12},
  \begin{align*}
    B(z,u)
    &=
    \sum\limits_{n=0}^{\infty} \dfrac{z^n}{n!}
      \sum\limits_{k=0}^n \binom{n}{k}b_{n-k,0}u^k \\
    &=
    \left(
      \sum\limits_{n=0}^{\infty} b_{n,0} \dfrac{z^n}{n!}
    \right)
    \left(
      \sum\limits_{n=0}^{\infty} \dfrac{(zu)^n}{n!}
    \right)
    =
    \dfrac{e^{-z}}{\sqrt{(1-2z)^3}} \cdot e^{zu}.
  \end{align*}
\end{proof}

\begin{remark}
  Combinatorially, relation~\eqref{eq:egf12} represents another facet
  of the inclusion-exclusion principle.  This can be proved using the
  symbolic method, see~\cite[Section~III.7.4]{fs}.  Indeed, the
  exponential generating function $e^{zv}/\sqrt{(1-2z)^3}$ corresponds
  to the labeled product of the combinatorial class of urns
  $\mathcal{U}$ and the derivative of the combinatorial class of
  matchings $\mathcal{M}$.  Urns can be interpreted as distinguished
  arcs (marked by the variable $v$) that are inserted into a matching
  to form double arcs, that is, patterns \ptwoone\,.  Passing to the
  derivative $\mathcal{M}'$ corresponds to adding a supplementary arc
  to all matchings.  This operation guarantees that every matching
  possesses at least one arc (and hence, there is something to
  double).  Finally, the variable substitution $v=u-1$ has the meaning
  of the inclusion-exclusion itself.
\end{remark}

\subsection{Asymptotics for patterns of size 2}

Here, we provide the asymptotics of the distribution of size-2
patterns. These formulas allow us to understand in a
concise way how the pattern distribution behaves when the size of
matchings increases, without the need for precise calculations.

\begin{thm}
  The asymptotic behavior of the numbers of matchings with $k$ occurrences of pattern \ptwoone\ (resp. \ponetwo), as $n\to\infty$, is
  $$
    a_{n,k} \sim  \dfrac{1}{2^k k!}\left(\dfrac{2}{e}\right)^{n+1/2} n^n.
  $$
  \label{theorem:12_row_asymptotics}
\end{thm}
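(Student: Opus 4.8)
The plan is to apply singularity analysis, in the sense of Flajolet and Sedgewick~\cite{fs}, to the exponential generating function computed in Lemma~\ref{lemma:egf12}. Recall that $b_{n,k}=a_{n+1,k}$ and that
\[
  [u^k]B(z,u) = \frac{z^k}{k!}\cdot\frac{e^{-z}}{\sqrt{(1-2z)^3}},
\]
so that $b_{n,k}/n! = [z^n]\,[u^k]B(z,u)$. Holding $k$ fixed, I would treat $f_k(z) := [u^k]B(z,u)$ as a function of a single complex variable and extract the asymptotics of its coefficients as $n\to\infty$.

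The function $f_k$ is analytic on the disk $|z|<1/2$, and its only singularity on the circle $|z|=1/2$ is the algebraic singularity at $z=1/2$ coming from the factor $(1-2z)^{-3/2}$, since the prefactor $z^k e^{-z}/k!$ is entire. Writing $(1-2z)^{-3/2}=(1-z/\rho)^{-3/2}$ with $\rho=1/2$ and evaluating the analytic prefactor at the singularity gives
\[
  f_k(z)\ \underset{z\to 1/2}{\sim}\ \frac{e^{-1/2}}{2^k k!}\,(1-2z)^{-3/2}.
\]
After checking the standard $\Delta$-analyticity hypothesis, the transfer theorem with exponent $\alpha=3/2$ and $\Gamma(3/2)=\sqrt{\pi}/2$ yields
\[
  \frac{b_{n,k}}{n!}=[z^n]f_k(z)\ \sim\ \frac{e^{-1/2}}{2^k k!}\cdot\frac{2^n n^{1/2}}{\Gamma(3/2)}
  =\frac{e^{-1/2}}{2^k k!}\cdot\frac{2^{n+1}\sqrt{n}}{\sqrt{\pi}}.
\]

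It then remains to multiply by $n!$, invoke Stirling's formula $n!\sim\sqrt{2\pi n}\,(n/e)^n$, and translate back via $a_{n,k}=b_{n-1,k}$ by replacing $n$ with $n-1$. Carrying this out, $b_{n,k}\sim \frac{e^{-1/2}}{2^k k!}\,2^{n+3/2}\,n^{n+1}/e^{n}$, and after the shift one simplifies $(n-1)^{n}=n^n(1-1/n)^n\sim e^{-1}n^n$ and collects the powers of $2$ and $e$. I expect the constants to assemble exactly into
\[
  a_{n,k}\ \sim\ \frac{1}{2^k k!}\Big(\frac{2}{e}\Big)^{n+1/2}n^n.
\]

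The conceptual content is entirely in the first two paragraphs; the main obstacle is purely bookkeeping. One must carry the index shift $a_{n,k}=b_{n-1,k}$ through Stirling's approximation without dropping sub-exponential factors, in particular handling the replacement of $(n-1)^n$ by $e^{-1}n^n$ and the resulting cancellation $e^{-1/2}\cdot e^{-(n-1)}\cdot e^{-1}=e^{-(n+1/2)}$ that produces the clean factor $(2/e)^{n+1/2}$. A secondary point worth stating explicitly is that, since $k$ is held fixed while $n\to\infty$, the value $z^k$ at $z=1/2$ contributes the factor $2^{-k}$, so the $k$-dependence $1/(2^k k!)$ originates entirely from the analytic prefactor and not from the singular part.
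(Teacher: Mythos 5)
Your proof is correct, but it follows a genuinely different route from the one the paper actually writes out --- namely, the singularity-analysis route that the paper only gestures at in the remark immediately following the theorem (``There is an alternative proof that relies on the singularity analysis of the exact form of the series $B(z,u)$ given by Lemma~\ref{lemma:egf12}''). The paper's own proof is elementary: it takes the alternating-sum formula of Lemma~\ref{lemma:zeroth_row}, factors out $(2n+1)!!$ to exhibit a partial sum converging to $e^{-1/2}$, applies Stirling's formula to get $b_{n,0}\sim 2^{n+3/2}e^{-n-1/2}n^{n+1}$, and then lifts this to general $k$ via the identity $b_{n,k}=\binom{n}{k}b_{n-k,0}$ of Lemma~\ref{l12}; your route instead reads off the local behaviour of $[u^k]B(z,u)=\frac{z^k}{k!}e^{-z}(1-2z)^{-3/2}$ at $z=1/2$ and transfers with $\Gamma(3/2)=\sqrt{\pi}/2$. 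The two meet at exactly the same intermediate estimate $b_{n,k}\sim\frac{2^{n-k+3/2}}{k!}\,n^{n+1}e^{-n-1/2}$, and your final bookkeeping through $a_{n,k}=b_{n-1,k}$ and $(n-1)^n\sim e^{-1}n^n$ is the same as the paper's and is carried out correctly. What each approach buys: the paper's argument is self-contained and needs no transfer theorem, though it quietly interchanges a limit with the sum $\sum_k\frac{(-1)^k}{k!}\frac{n\cdots(n-k+1)}{(2n+1)\cdots(2n-2k+3)}\to e^{-1/2}$, a step that deserves a tail bound; your argument is more mechanical, sidesteps that interchange entirely (the inclusion--exclusion has already been packaged into the factor $e^{-z}$), and scales better to refinements such as uniformity in $k$ or the Poisson limit law of Corollary~\ref{cor:12_limit}. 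Just make sure you actually state the $\Delta$-analyticity verification rather than deferring it: it is immediate here because $z^k e^{-z}/k!$ is entire and $(1-2z)^{-3/2}$ is analytic on $\mathbb{C}\setminus[1/2,\infty)$, so one sentence suffices.
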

\begin{proof}
  Let us start with establishing the asymptotics of $b_{n,0}$.
  Due to Lemma~\ref{lemma:zeroth_row}, we have
  \begin{align*}
    b_{n,0}
    &=
    \sum\limits_{k=0}^n (-1)^{n-k}\binom{n}{k}(2k+1)!! \\
    &=
    (2n+1)!!\sum\limits_{k=0}^n \dfrac{(-1)^k}{k!} \cdot
      \dfrac{n(n-1)\ldots(n-k+1)}{(2n+1)(2n-1)\ldots(2n-2k+3)} \\
    &\sim
    (2n+1)!!\sum\limits_{k=0}^n  \dfrac{(-1)^k}{2^k k!}
    \sim
    \dfrac{(2n)(2n)!}{2^nn!e^{1/2}}
    \sim
    \dfrac{2^{n+3/2}}{e^{n+1/2}} \cdot n^{n+1},
  \end{align*}
  where the last passage is done with the help of Stirling's formula.
  Now, the general case can be done by Lemma~\ref{l12}:
  $$
    b_{n,k} = \binom{n}{k}b_{n-k,0} \sim
    \dfrac{n^k}{k!} \cdot \dfrac{2^{n-k+3/2}}{e^{n-k+1/2}} \cdot (n-k)^{n-k+1}
    \sim \dfrac{2^{n-k+3/2}}{k!} \cdot \dfrac{n^{n+1}}{e^{n+1/2}}.
  $$
  Passing to $a_{n,k} = b_{n-1,k}$, we get the final result.
\end{proof}

\begin{remark}
  There is an alternative proof that relies on the singularity analysis of the exact form of the series $B(z,u)$ given by Lemma~\ref{lemma:egf12}. For details of this technique, see \cite[Theorem VI.4]{fs}.
\end{remark}


\begin{cor}
  The limit distribution of pattern \ptwoone\ (resp. \ponetwo) in a uniform random matching of size $n$ follows asymptotically a Poisson law with parameter $1/2$.
  \label{cor:12_limit}
\end{cor}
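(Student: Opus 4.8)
The plan is to obtain the limit law directly from the row asymptotics already established, by computing the limit of the probability mass function. Write $a_n = (2n-1)!!$ for the total number of matchings of size $n$, and let $X_n$ denote the number of occurrences of \ptwoone\ in a uniform random matching of size $n$, so that $P(X_n = k) = a_{n,k}/a_n$. By Theorem~\ref{theorem:12_row_asymptotics}, for each fixed $k$ the numerator satisfies $a_{n,k} \sim \frac{1}{2^k k!}\,(2/e)^{n+1/2}\,n^n$ as $n\to\infty$, so all the analytic work is essentially done; what remains is to divide by $a_n$ and recognize the answer.

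First I would pin down the asymptotics of the denominator. Using the identity $(2n-1)!! = (2n)!/(2^n n!)$ together with Stirling's formula, one gets $a_n \sim \sqrt{2}\,(2/e)^n\,n^n$. Substituting both estimates and cancelling the common factors $(2/e)^n n^n$ yields, for every fixed $k$,
\[
  P(X_n = k) = \frac{a_{n,k}}{a_n} \longrightarrow \frac{1}{2^k k!}\cdot\frac{(2/e)^{1/2}}{\sqrt{2}} = \frac{e^{-1/2}}{2^k k!},
\]
which is precisely the Poisson probability $e^{-\lambda}\lambda^k/k!$ with $\lambda = 1/2$.

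To conclude convergence in distribution I would observe that the limiting values $p_k = e^{-1/2}/(2^k k!)$ form a genuine probability distribution, since $\sum_{k\ge0} p_k = e^{-1/2}e^{1/2} = 1$. Hence the pointwise convergence $P(X_n=k)\to p_k$ for every fixed $k$ already forces $X_n$ to converge in law to a $\mathrm{Poisson}(1/2)$ random variable, by the standard fact that convergence of discrete densities to a density implies weak convergence.

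I do not expect a genuine obstacle, as the heavy lifting is contained in Theorem~\ref{theorem:12_row_asymptotics}; the only points demanding care are the Stirling estimate for $(2n-1)!!$ and the remark that fixed-$k$ pointwise convergence suffices (so no uniformity in $k$ is needed). An equivalent route, consistent with the remark following Theorem~\ref{theorem:12_row_asymptotics}, would be to read off the probability generating function $G_n(u) = (n-1)!\,[z^{n-1}]B(z,u)/a_n$ from Lemma~\ref{lemma:egf12}, show $G_n(u)\to e^{(u-1)/2}$ via singularity analysis of $B(z,u)$, and then apply the continuity theorem for probability generating functions; I would present the direct ratio computation as the main argument because it reuses the asymptotics verbatim.
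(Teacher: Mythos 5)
Your proposal is correct and follows essentially the same route as the paper: the paper's proof is precisely the ratio $a_{n,k}/(2n-1)!! \sim e^{-1/2}/(2^k k!)$ obtained from Theorem~\ref{theorem:12_row_asymptotics} and Stirling's formula. Your additional remarks (the Stirling estimate $(2n-1)!!\sim\sqrt{2}\,(2/e)^n n^n$ and the observation that pointwise convergence of the probability mass function to a genuine probability distribution suffices for convergence in law) only make explicit what the paper leaves implicit.
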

\begin{proof}
  This follows from Theorem~\ref{theorem:12_row_asymptotics} and Stirling's formula:
  $$
    \dfrac{a_{n,k}}{(2n-1)!!} \sim \dfrac{e^{-1/2}}{2^kk!}.
  $$
\end{proof}

\begin{cor}
  The ratio of numbers from the $k$-th row to the numbers of the $(k+1)$-th row of Table~\ref{table:12} tends to $2(k+1)$.
  In other words, for any $k \in \N$,
  $$
    \dfrac{a_{n,k}}{a_{n,k+1}} \sim 2(k+1).
  $$
\end{cor}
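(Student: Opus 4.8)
The plan is to read the result directly off the row asymptotics established in Theorem~\ref{theorem:12_row_asymptotics}. That theorem states that, for each fixed $k$,
$$
  a_{n,k} \sim \frac{1}{2^k k!}\left(\frac{2}{e}\right)^{n+1/2} n^n
  \qquad (n \to \infty).
$$
The key structural observation I would exploit is that the right-hand side factors as a product $c_k \cdot g(n)$, where $c_k = 1/(2^k k!)$ depends only on $k$, and $g(n) = (2/e)^{n+1/2} n^n$ depends only on $n$. In particular, the $n$-dependent factor $g(n)$ is the \emph{same} for every value of $k$.

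First I would apply the theorem twice, once with exponent $k$ and once with $k+1$, writing $a_{n,k} = c_k\, g(n)\,(1+o(1))$ and $a_{n,k+1} = c_{k+1}\, g(n)\,(1+o(1))$. Since the common factor $g(n)$ cancels in the quotient, the ratio becomes $\frac{c_k}{c_{k+1}}(1+o(1))$, and I would then simply compute the constant:
$$
  \frac{c_k}{c_{k+1}}
  = \frac{1/(2^k k!)}{1/\big(2^{k+1}(k+1)!\big)}
  = \frac{2^{k+1}(k+1)!}{2^k\, k!}
  = 2(k+1),
$$
which is exactly the claimed limit $a_{n,k}/a_{n,k+1} \sim 2(k+1)$.

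There is essentially no serious obstacle in this argument; the only point requiring a word of justification is the passage from two asymptotic equivalences to the ratio of their leading constants. This is legitimate precisely because $k$ is held fixed while $n \to \infty$, so the error terms $o(1)$ in numerator and denominator each vanish, and because the $n$-dependent part of the asymptotic is manifestly independent of $k$. If one wished to avoid invoking the asymptotic estimate and instead argue combinatorially, an alternative route would start from Lemma~\ref{l12}, which gives $a_{n,k} = \binom{n-1}{k} a_{n-k,0}$; forming the ratio $a_{n,k}/a_{n,k+1} = \frac{\binom{n-1}{k}}{\binom{n-1}{k+1}} \cdot \frac{a_{n-k,0}}{a_{n-k-1,0}}$ and using the growth rate $a_{m+1,0}/a_{m,0} \sim 2m$ (a consequence of the same asymptotics) would yield the result as well, but the direct division of the closed-form asymptotics is cleaner and is the approach I would take.
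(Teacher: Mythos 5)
Your argument is correct and is essentially the paper's: the paper deduces the corollary immediately from Corollary~\ref{cor:12_limit} (the Poisson$(1/2)$ limit law), which is itself just the normalized form of the asymptotics in Theorem~\ref{theorem:12_row_asymptotics} that you divide directly. The computation $c_k/c_{k+1}=2(k+1)$ and the justification that the $n$-dependent factor cancels for fixed $k$ are exactly what is needed.
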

\begin{proof}
  This is an immediate consequence of Corollary~\ref{cor:12_limit}.
\end{proof}

\subsection{Patterns of size 3}

In this Subsection, we examine endhered patterns of size 3.  There are
six of them and, as it follows from Lemma~\ref{lemma:etwist}, they are
divided into two equivalence classes with respect to distributions of
these patterns in matchings (see Fig.~\ref{fig:wilf}).  Thus, our goal
is to study these two different distributions represented, for
instance, by patterns 321 and 132.  For simplicity, we will use the
following notations:
$$
  c_{n,k} := a_{n,k}(321) = a_{n,k}(123)
$$
to denote the number of matchings of size $n$ containing $k$ patterns 321 (\pthreetwoone), and
$$
  d_{n,k} := a_{n,k}(132) = a_{n,k}(213) = a_{n,k}(231) = a_{n,k}(312)
$$
to designate the number of matchings of size $n$ containing $k$ patterns 132 (\ponethreetwo).

\begin{thm}
  For any $n>0$ and $k\geq0$,
  \begin{equation}
    \begin{aligned}
      c_{n,0} & = \sum_{s=0}^{\lfloor n/2 \rfloor} \binom{n-s}{s} a_{n-s,0}, \\
      c_{n,k} & = \sum_{s=1}^{\lfloor (n-k)/2 \rfloor}
      \binom{k+s-1}{k}
      \binom{n-k-s}{s}
      a_{n-k-s,0} \mbox{\quad if } k>0,
    \end{aligned}
    \label{eq:321}
  \end{equation}
  where $a_{n,0}$ is the number of matchings of size $n$ avoiding
  pattern 21 (\ptwoone).
  \label{theorem:321}
\end{thm}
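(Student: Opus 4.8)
The plan is to reuse the arc-expansion bijection underlying Lemma~\ref{l12} and simply re-read it through the lens of the pattern 321 instead of 21. Recall that this bijection encodes an arbitrary matching of size $n$ as a pair consisting of a 21-avoiding matching $M_0$ of some size $m$ (the \emph{skeleton}) together with a tuple of multiplicities $(t_1,\dots,t_m)$ with $t_i\ge 1$ and $t_1+\dots+t_m=n$: the $i$-th arc of $M_0$ is blown up into a $t_i$-uple arc, that is, a maximal tower of $t_i$ mutually nested arcs with consecutive starting points and consecutive ending points. I would begin by restating this correspondence and observing that it is a genuine bijection for every value of $m$, which is exactly what the proof of Lemma~\ref{l12} provides.

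The key new observation is a formula for the number of occurrences of 321 in such a matching. First I would argue that any occurrence of 321 consists of three mutually nested arcs whose starting points are three consecutive integers and whose ending points are three consecutive integers; by the stacking relation these three arcs lie in one and the same tower, so no occurrence can straddle two towers. Then, inside a tower of height $t$, the occurrences of 321 are precisely the triples of consecutive levels, of which there are exactly $t-2$ when $t\ge 3$ and none otherwise. Summing over towers yields that the total number of occurrences of 321 equals $\sum_{i=1}^m \max(t_i-2,\,0)$, the content that upgrades Lemma~\ref{l12}, where the analogous count was $\sum_i (t_i-1)$ for the pattern 21.

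With this in hand the theorem becomes a bookkeeping exercise. I would let $s$ denote the number of towers of height at least $2$ (the non-single arcs) and write $e_i=t_i-1\ge 1$ for those $s$ towers. The 321-count condition reads $\sum e_i - s = k$, i.e.\ $\sum e_i = k+s$ with each $e_i\ge 1$, while the size condition $\sum t_i=n$ forces the skeleton size to be $m=n-k-s$. The three factors in~\eqref{eq:321} then arise as follows: there are $a_{n-k-s,0}$ skeletons, $\binom{n-k-s}{s}$ choices of which skeleton arcs become the non-single towers, and $\binom{k+s-1}{k}$ ways to write $k+s$ as an ordered sum of $s$ positive parts (stars and bars), giving the admissible height profiles. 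The range $1\le s\le \lfloor(n-k)/2\rfloor$ comes from $m\ge s$. For $k=0$ the tall towers disappear, $s$ counts the doubled arcs (each with $e_i=1$), the height factor degenerates to $1$, and one recovers the first line of~\eqref{eq:321} with $s$ now allowed to be $0$; this is exactly why the two cases must be stated separately.

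I expect the only delicate point to be the localization claim of the second paragraph: that every occurrence of 321 sits inside a single tower, and that a tower of height $t$ contributes exactly $\max(t-2,0)$ occurrences. Once that is established, the passage to~\eqref{eq:321} is a routine stars-and-bars count over the skeleton.
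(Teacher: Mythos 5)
Your proposal is correct and follows essentially the same route as the paper: both decompose a matching into a 21-avoiding skeleton with arcs expanded into towers, observe that a tower of height $t$ contributes $\max(t-2,0)$ occurrences of 321 (and that no occurrence straddles two towers, by maximality of towers), and then count skeletons, choices of expanded arcs, and height profiles by stars and bars. Your explicit multiplicity-tuple bookkeeping is just a more formal rendering of the paper's ``double $s$ arcs, then stack $k$ additional arcs onto them'' description.
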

\begin{proof}
  Each matchings avoiding pattern \pthreetwoone\ can be uniquely
  obtained from a matching avoiding pattern \ptwoone\ by doubling some
  of its arcs (possibly, none).  In particular, to get a matching of
  size $n$ by doubling exactly $s$ arcs, where $0\leq s\leq \lfloor n/2
  \rfloor$, we take a matching of size $n-s$ avoiding \ptwoone\,, and
  then choose $s$ of its arcs to transform them into $s$ occurrences
  of \ptwoone.  Since this can be done in $\binom{n-s}{s}$ ways, we
  obtain relation~\eqref{eq:321} for $k=0$.

  Similarly, any matching of size $n$ with $k>0$ occurrences of
  pattern \pthreetwoone\ can be uniquely obtained by the following
  procedure.  First, we fix $0< s\leq \lfloor (n-k)/2 \rfloor$ and
  take a matching of size $n-k-s$ avoiding \ptwoone\,.  Second, we
  choose $s$ arcs in this matching and double them, there are
  $\binom{n-k-s}{s}$ ways to do it.  Finally, we stack additional $k$
  arcs onto just created $s$ occurrences of \ptwoone\,, which can be
  done in $\binom{k+s-1}{k}$ ways (apply the stars-and-bars
  argument~\cite{feller}).
\end{proof}

Table~\ref{table:321} shows the first values of $c_{n,k}$.

\begin{table}[ht]
  \small
  \centering
  \begin{tabular}{r|ccccccccc|l}
    \diagbox[width=2.5em,height=2.5em]{k}{n} &  1 & 2 & 3 & 4 & 5 & 6 & 7 & 8 & 9  &  \\ \hline
0 & 1 & 3 & 14 & 100 & 906 & 10022 & 130864 & 1969884 & 33583700 & \\
1 & 0 & 0 & 1 & 4 & 34 & 332 & 3866 & 52400 & 811248 & \\
2 & 0 & 0 & 0 & 1 & 4 & 36 & 362 & 4304 & 59256 & \\
3 & 0 & 0 & 0 & 0 & 1 & 4 & 38 & 392 & 4752 & \\
4 & 0 & 0 & 0 & 0 & 0 & 1 & 4 & 40 & 422 & \\
5 & 0 & 0 & 0 & 0 & 0 & 0 & 1 & 4 & 42 & \\
6 & 0 & 0 & 0 & 0 & 0 & 0 & 0 & 1 & 4 & \\
7 & 0 & 0 & 0 & 0 & 0 & 0 & 0 & 0 & 1 & \\
  \end{tabular}
  \caption{Distribution of pattern \protect\pthreetwoone\ (321).}
\label{table:321}
\end{table}

  In order to study the distribution of the occurrences of pattern \ponethreetwo\,,
  we apply Goulden-Jackson {\em cluster method}~\cite{goulden-jackson1, goulden-jackson2}
  and the inclusion-exclusion principle in the framework of the symbolic method~\cite[Section~III.7.4]{fs}.
  Let us trace the main steps.
  
  We start by considering all possible matchings;
  the corresponding generating function is given by
  $$
    F (z) = \sum_{n=0}^\infty (2n-1)!!\, z^n =
    1 + z + 3 z^2 + 15 z^3 + 105 z^4 + \ldots
  $$
  In every matching, we distinguish certain arcs, say, by coloring them violet.
  Algebraically, this is done by passing to the generating function
  $$
    G(z,v) = \sum_{n=0}^{\infty} \sum_{k=0}^n g_{n,k} z^n v^k = F(z + zv),
  $$
  where $g_{n,k}$ is the number of matchings of size $n$ with $k$ violet arcs (marked by the variable $v$).
  Next, we replace violet arcs by ``thick arcs''
  constructed from 3 arcs forming pattern \ponethreetwo\
  (in general case, we should replace them by {\em clusters}
  consisting of intersected copies of a given pattern;
  pattern \ponethreetwo\,, however, admits no possible self-intersection).
  This corresponds to the generating function
  $$
    H(z,v) = G(z,z^2v).
  $$
  And finally, according to the inclusion-exclusion principle, we
  obtain the generating function
  $$
    D(z,u) = \sum_{n=0}^\infty \sum_{k=0}^n d_{n,k} z^n u^k = H(z,u-1)
  $$
  whose coefficients $d_{n,k}$ enumerate the matchings of size $n$ having $k$ occurrences of the pattern \ponethreetwo\
  (see Figure~\ref{fig:inclusion-exclusion} illustrating the process).

\begin{figure}[h]
  \centering
  \includegraphics[width=\textwidth]{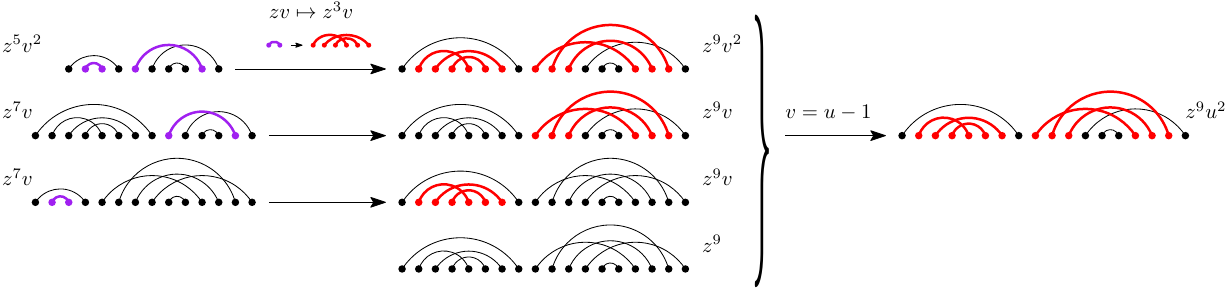}
  \caption{Cluster method and the inclusion-exclusion principle.}
  \label{fig:inclusion-exclusion}
\end{figure}

Thus, the following result takes place.

\begin{thm}
  The bivariate generating function
  $$
    D(z,u) = \sum_{n=0}^\infty \sum_{k=0}^n d_{n,k} z^n u^k,
  $$
  where $d_{n,k}$ is the number of matchings of size $n$ containing $k$ patterns \ponethreetwo\,
  can be expressed as
  $$
    D(z,u) = F\big(z + (u-1)z^3\big)  =
    \sum_{n=0}^\infty (2n-1)!!\, \big(z + (u-1)z^3\big)^n.
  $$
  \label{theorem:132}
\end{thm}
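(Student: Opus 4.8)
The plan is to assemble the four-step chain of generating-function substitutions sketched just before the statement, and to supply the single combinatorial fact on which the cluster method rests in this case. First I would pin down the meaning of each intermediate series. The function $F(z)=\sum_{n}(2n-1)!!\,z^n$ is the ordinary generating function counting matchings by size, since there are $(2n-1)!!$ matchings of size $n$. Expanding $G(z,v)=F(z+zv)=\sum_{n}(2n-1)!!\,z^n(1+v)^n$ shows that $[z^nv^k]G=\binom{n}{k}(2n-1)!!$, which is exactly the number of matchings of size $n$ with $k$ arcs distinguished (``coloured violet''); so $G$ records an arbitrary choice of marked arcs, weighted by $v$ per mark.

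The heart of the argument is the replacement $H(z,v)=G(z,z^2v)$: each violet arc, of weight $zv$ in $G$, is inflated into a block of three arcs forming the pattern \ponethreetwo\,, of weight $z^3v$, the extra factor $z^2$ recording the two added arcs. Reading off coefficients, $[z^nv^k]H=\binom{n-2k}{k}\big(2(n-2k)-1\big)!!$, and I would interpret this as counting pairs consisting of a matching of size $n$ together with $k$ distinguished occurrences of \ponethreetwo\,. The step that legitimises this interpretation — and the place where I expect the real work to lie — is verifying that \ponethreetwo\ admits no self-intersection: two distinct occurrences can never share an arc. I would prove this directly from Definition~\ref{def:endhered}. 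An occurrence at position $(i+1,j+1)$ uses the arcs $i+1\mapsto j+1$, $i+2\mapsto j+3$, $i+3\mapsto j+2$, with consecutive starts $i+1,i+2,i+3$ and consecutive ends $j+1,j+2,j+3$. A second occurrence sharing a starting point forces, after a short case analysis on the only two possible shifts $i'=i+1$ and $i'=i+2$, a contradiction: since each point lies on a unique arc, the two occurrences would have to assign two different endpoints to one common point. With overlaps excluded, every cluster in the Goulden–Jackson method is a single isolated copy of the pattern, and the inflation/contraction between a matching with $k$ marked arcs and a matching with $k$ marked (hence automatically disjoint) occurrences is a genuine bijection, confirming the count of $H$.

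With the no-overlap property in hand, the final step is the inclusion–exclusion substitution $v=u-1$, the same device used in the remark after Lemma~\ref{lemma:egf12}, which converts the ``distinguished-occurrence'' count carried by $H$ into the exact count recorded by $D(z,u)=\sum_{n,k}d_{n,k}z^nu^k=H(z,u-1)$; concretely, $h_{n,k}=\sum_{j}\binom{j}{k}d_{n,j}$ gives $H(z,v)=D(z,1+v)$. Composing the three substitutions then yields
\[
  D(z,u)=H(z,u-1)=G\big(z,z^2(u-1)\big)=F\big(z+z\cdot z^2(u-1)\big)=F\big(z+(u-1)z^3\big),
\]
and inserting the explicit form of $F$ gives $D(z,u)=\sum_{n}(2n-1)!!\,\big(z+(u-1)z^3\big)^n$, as claimed. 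The only genuinely non-formal ingredient is the self-intersection lemma; everything else is the bookkeeping of the symbolic method.
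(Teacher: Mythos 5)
Your proposal is correct and follows essentially the same route as the paper: the chain $F \to G(z,v)=F(z+zv) \to H(z,v)=G(z,z^2v) \to D(z,u)=H(z,u-1)$ is exactly the Goulden--Jackson cluster-method and inclusion--exclusion derivation that the paper gives immediately before the theorem statement. The only difference is that you explicitly verify that the pattern $132$ admits no self-intersection (so that every cluster is a single copy of the pattern), a fact the paper asserts only parenthetically; this is a welcome elaboration rather than a different method.
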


\begin{table}[ht]
  \small
  \centering
  \begin{tabular}{r|ccccccccc|l}
    \diagbox[width=2.5em,height=2.5em]{k}{n} &  1 & 2 & 3 & 4 & 5 & 6 & 7 & 8 & 9  &  \\ \hline
    0 & 1 & 3 & 14 & 99 & 900 & 9978 & 130455 & 1965285 & 33522915 & \\
    1 & 0 & 0 & 1 & 6 & 45 & 414 & 4635 & 61110 & 927090 & \\
    2 & 0 & 0 & 0 & 0 & 0 & 3 & 45 & 630 & 9405 & \\
    3 & 0 & 0 & 0 & 0 & 0 & 0 & 0 & 0 & 15 &
  \end{tabular}
  \caption{Distribution of pattern \protect\ponethreetwo\ (132).}
\label{table:132}
\end{table}

Table~\ref{table:132} shows the first values of $d_{n,k}$.

The limit distributions of patterns \pthreetwoone\ and \ponethreetwo\ are different.
In the case of \pthreetwoone\,, for a fixed $k\geq 0$, as $n\to\infty$, we have

$$
  c_{n,k} \sim \sqrt2 \cdot C_k \cdot \dfrac{(2n)^{n-k}}{e^n},
$$
where
$$
  C_0 = 1
  \qquad\mbox{and}\qquad
  C_k = \sum\limits_{s=1}^k \binom{k-1}{s-1}\dfrac{1}{2^s s!}
  \quad\mbox{if } k>0.
$$
On the other hand, for \ponethreetwo\, we have
$$
  d_{n,k} \sim \dfrac{2^{n-2k+1/2}}{k!e^n} \cdot n^{n-k},
$$
as $n\to\infty$.
In other words, for large values of $n$, we have
$$
  \dfrac{c_{n,k}}{(2n-1)!!} \sim \dfrac{C_k}{2^{k}} \cdot \dfrac{1}{n^k}
  \qquad\mbox{and}\qquad
  \dfrac{d_{n,k}}{(2n-1)!!} \sim \dfrac{1}{2^{2k}k!} \cdot \dfrac{1}{n^k},
$$

  meaning that a large uniform random matching avoids both patterns
  with high probability.  The proof of this asymptotic behavior
  requires other technical means and is beyond the scope of this
  paper.  We will present this proof as part of the analysis of the
  asymptotic behavior of any
  (more complex) endhered pattern in our next work.

\section{Occurrences of endhered patterns in RNA secondary structures and shapes}
\label{section:p}

In this section, we shift our attention to the native (real-world) data and
discuss how endhered patterns are distributed in various
representations of the secondary RNA structures obtained from Protein
Data Bank~\cite{pdb} (PDB, \url{https://www.rcsb.org}) using our Python scripts
(\url{https://gitlab.com/celiabiane/endhered_pattern}), which depends on
GEMMI~\cite{gemmi} to parse mmCIF files from PDB
(\url{https://github.com/project-gemmi/gemmi}), and
uses two methods to detect base pairs in RNA molecules:
a closed-source software
x3DNA-DSSR~\cite{3dna,dssr} and an open-source software FR3D-python~\cite{fr3d,webfr3d} (\url{https://github.com/BGSU-RNA/fr3d-python}).

  The software x3DNA-DSSR directly produces extended dot-bracket
  notations. To derive these notations x3DNA-DSSR uses only {\em
    canonical pairs}: A-U, C-G, wobble G-U, and A-T (in RNA-DNA
  hybrids) with cis. Watson-Crick/Watson-Crick interactions and
  without forming parallel mini-duplexes.  FR3D-python gives a list of
  base pairs. We parse its results, filter canonical base pairs {\em à
    la} x3DNA-DSSR, and produce extended dot-bracket notations using a
  simple First-Come-First-Served method.

  There are several hundred known, existing in nature, modifications
  of nucleotides. In the data they are denoted by special one-, two-,
  or three-letter codes, different from the classical 4 letters
  UACG. Modified nucleotides are mapped to short 1-letter nucleotide
  names.  For example A23 is mapped A, EQ0 to G, and CCC to C.
  Nucleic Acid KnowledgeBase~\cite{NAKB1, NAKB2}
  (\url{https://www.nakb.org/modifiednt.html}) contains details for
  these mappings. Following x3DNA-DSSR\footnote{See
  \url{http://forum.x3dna.org/rna-structures/modified-nucleotides-incorrect}}
  we adapt one exception to these rules: pseudouridine (PSU) is mapped
  to P and not to U.  This adaptation allows us to better compare the
  x3DNA-DSSR and FR3D-python results.
  
From the RNA secondary structures obtained with x3NA-DSSR and
FR3D-python, we collapsed unpaired nucleotides in order to keep only
paired ones. When keeping only RNA structures composed of one chain,
this leads to 1501 RNA structures, in 933 (resp. 929) of them
x3DNA-DSSR (resp. FR3D-python) has found at least one canonical base
pair. The data has been accessed on August 29, 2024.

The structures in the extended dot-bracket notation are converted to
matchings using an algorithm based on stacks. The algorithm works as
follows: the word composed of parentheses is read from left to right,
when an opening character is met its index is stacked in a stack
corresponding to the nature of the character. When a closing character
is met, a pair corresponding to the last index in the corresponding
stack and the current index is added to the matching, and the index of
the opening parenthesis is unstacked. Figure~\ref{fig:conversion}
shows an example of this conversion.

\begin{figure}[h]
    \centering
    \includegraphics[scale=0.70]{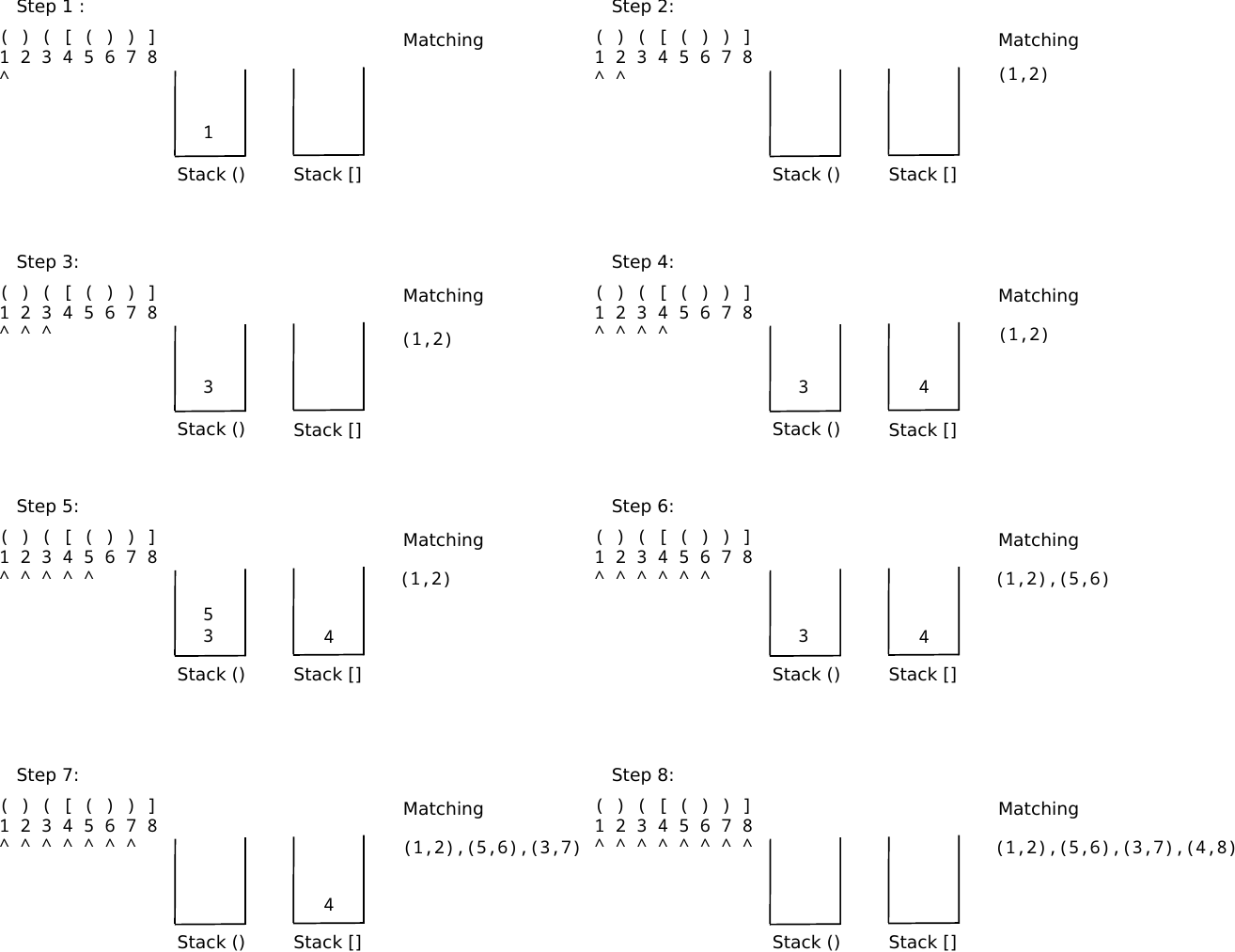}
    \caption{Example of conversion from dot-bracket notation to a matching.}
    \label{fig:conversion}
\end{figure}

Among the 933 (929 in case of FR3D-python) paired and non empty secondary structures,
926 (921 in case of FR3D-python) contains
the pattern \ptwoone. In case of x3DNA-DSSR, the seven remaining structures, with PDB
identifiers
3IZY,
5UZ9,
6B45,
6B46,
6B47,
6B48,
6VQX, contain a
single base pairing.
In case of FR3D-python, there are 12 of them:
1FC8,
1R4H,
3IZY,
4R8I,
4Z7K,
5UZ9,
6B45,
6B46,
6B47,
6B48,
6BY5,
6VQX.
A detailed comparison of the results of the two programs would be very
enriching, but is beyond the scope of this article.

None of the considered 933 structures are encoded with more than 4 types of parentheses.
In case of x3DNA-DSSR, only 11 structures
(4DS6,
4E8V,
8OLS,
8OLV,
8OLW,
8OM0,
8RUI,
8RUJ,
8RUL,
8RUM,
8RUN) are written using 4 types of parentheses, 27
employ 3 types of parentheses
(5KMZ,
5TPY,
6AGB,
6AHR,
6UES,
6WLQ,
6WLR,
6WLS,
7EZ0,
7K16,
7U4A,
7UO5,
7UVT,
7XD3,
7XD4,
7XD7,
7XSK,
7XSL,
7XSM,
7XSN,
8HD6,
8HD7,
8I7N,
8T29,
8T2A,
8T2B,
8T2O),
and 193 structures are encoded with 2 types of parentheses.
In case of FR3D-python, the corresponding numbers are 1, 39 and 184.
We need to be careful when comparing these numbers, x3DNA-DSSR
uses elimination-gain heuristics~\cite{smit}
to produce extended dot-bracket notations,
while we adapt First-Come-First-Served method.

The distribution of patterns
\ptwoone\ and \pthreetwoone\ in RNA secondary structure is shown in
Figure~\ref{fig:scatter}. We can observe that the majority of RNA
secondary structures are of small size (between 0 and 100 nucleotides),
and there seems to be a linear relation between the size of the
matching and the number of occurrence of the pattern. This can be
explained by the large number of hairpin patterns in RNA secondary
structures.

Both x3DNA-DSSR and FR3D-python detect 2 RNA structures containing the
\ponetwo\ pattern: 4M4O, 5U3G. One of them, 4M4O, corresponds to the
minE aptamer involved in a complex with a lysozyme. Both methods,
based on x3DNA-DSSR and FR3D-python, give the following secondary
structure:
$$
((((((.(((((.((.......((((.(.(.[..)]..).)))))).))))).))))))
$$

Another one, 5U3G, is the Dickeya dadantii ykkC riboswitch, which has
the following secondary structure:
$$
\begin{aligned}
\text{x3DNA-DSSR:} & \quad ((((((..((...(((((....))))).))....))))))(((((((....))))))).(.[...(((.......))).)..].. \\
\text{FR3D-python:} & \quad .(((((..((...(((((....))))).))....))))).(((((((....))))))).(.[...(((.......))).)..]..
\end{aligned}
$$
Diving into the data, we see that FR3D-python
does not detect GTP-C pair on position (1,40) in this case.

The results of two methods differ in the case of molecule 7K16 (Tamana
Bat Virus xrRNA1):
$$
\begin{aligned}
  \text{x3DNA-DSSR:} & \quad \{\{...((((((((....))))))((((...[[[...))))\}\}.))...]]]\\
  \text{FR3D-python:} & \quad  .(...[[((((((....))))))((((...\{\{\{...)))))..]]...\}\}\}
\end{aligned}
$$
In this case x3DNA-DSSR detects, in addition to FR3D-python,
a 5GP-C base pair on position (1,42). Removing this pair,
we create a \ponethreetwo\ pattern. See Table~\ref{table:data} for more details.

Pattern \ptwothreeone\ is contained only in 4M4O,
while pattern \pthreeonetwo\ belongs only
to 5U3G. Pattern \pthreetwoone\ is contained in 918 (908 in case of FR3D-python)
structures.

It is surprising that pattern \ponetwo\ appears so rarely in RNA
structures while pseudoknots are thought to have important biological
functions.  We observe that occurrence of this pattern are ``hidden''
by the high frequency of nested bonds (see
Figure~\ref{fig:pat12-nested-bonds}). To neutralize this effect, we
pass to RNA shapes.

\begin{figure}[h]
\centering
  \includegraphics{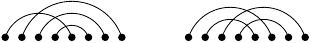}
\caption{Two examples of matchings with nesting bonds (occurrences of \protect\ptwoone)  but without occurrences of \protect\ponetwo.}
\label{fig:pat12-nested-bonds}
\end{figure}

\begin{table}[ht]
  \centering
  \scriptsize
  \begin{tabularx}{\textwidth}{c|p{4em}|p{4em}|X|X}
    Pattern   & \multicolumn{2}{c|}{RNA secondary structure} & \multicolumn{2}{c}{Extended RNA shape} \\ \hline
              & x3DNA-DSSR & FR3D-python & x3DNA-DSSR & FR3D-python  \\ \hline
    \ptwoone\ &  926 & 921 & 0 & 0 \\\hline
    \ponetwo\ & 2 (4M4O, 5U3G) & 3 (4M4O, 5U3G, 8SH5) &
    59
    (1A60,
    1E95,
    1KAJ,
    1KPD,
    1KPY,
    1KPZ,
    1L2X,
    1L3D,
    1RNK,
    1YG3,
    1YG4,
    1YMO,
    2A43,
    2AP0,
    2AP5,
    2G1W,
    2K95,
    2K96,
    2LC8,
    2M58,
    2M8K,
    2RP0,
    2RP1,
    2TPK,
    2XDB,
    3IYQ,
    3IYR,
    3IZ4,
    3U4M,
    3U56,
    3UMY,
    437D,
    4M4O,
    4PQV,
    4QG3,
    4QVI,
{\bf    4R8I,}
    5KMZ,
    5NPM,
    5TPY,
    5U3G,
    6AGB,
    6AHR,
    6DLQ,
    6DLR,
    6DLS,
    6DLT,
    6DNR,
    6E1T,
    6E1V,
    6VUH,
    7K16,
    7U4A,
    7UO5,
    8HIO,
    8T29,
    8T2A,
    8T2B,
    8T2O)
    &
    60
    (1A60,
    1E95,
    1KAJ,
    1KPD,
    1KPY,
    1KPZ,
    1L2X,
    1L3D,
    1RNK,
    1YG3,
    1YG4,
    1YMO,
    2A43,
    2AP0,
    2AP5,
    2G1W,
    2K95,
    2K96,
    2LC8,
    2M58,
    2M8K,
    2RP0,
    2RP1,
    2TPK,
    2XDB,
    3IYQ,
    3IYR,
    3IZ4,
    3U4M,
    3U56,
    3UMY,
    437D,
    4M4O,
    4PQV,
    4QG3,
    4QVI,
    5KMZ,
    5NPM,
    5TPY,
    5U3G,
    6AGB,
    6AHR,
{\bf    6D3P,}
    6DLQ,
    6DLR,
    6DLS,
    6DLT,
    6DNR,
    6E1T,
    6E1V,
    6VUH,
    7K16,
    7U4A,
    7UO5,
    8HIO,
{\bf    8SH5,}
    8T29,
    8T2A,
    8T2B,
    8T2O)
    \\\hline
    \ptwothreeone\ & 1 (4M4O) & 1 (4M4O) & 8 (3U4M, 3U56, 3UMY, 4M4O, 4QG3, 4QVI, {\bf 4R8I}, 5NPM)
                                         & 7 (3U4M, 3U56, 3UMY, 4M4O, 4QG3, 4QVI, 5NPM)\\\hline
    \pthreeonetwo\ & 1 (5U3G) & 1 (5U3G) & 13 (3U4M, 3U56, 3UMY, 4QG3, 4QVI, 5KMZ, 5NPM, 5U3G, 6DLQ, 6DLR, 6DLS, 6DLT, 6DNR)

                                         & 13 (3U4M, 3U56, 3UMY, 4QG3, 4QVI, 5KMZ, 5NPM, 5U3G, 6DLQ, 6DLR, 6DLS, 6DLT, 6DNR) \\\hline
    \ponethreetwo\ & 0        & 1 (7K16) & 0 & 0 \\\hline
    \pthreetwoone\ & 918      & 908 & 0 & 0 \\\hline
    \ptwoonethree\, \ponetwothree & 0 & 0 & 0 & 0\\ \hline
  \end{tabularx}
  \caption{
    RNA secondary structures and RNA shapes in which a given pattern occurs at least once.
    Differences are highlighted in bold.}
  \label{table:data}
\end{table}
 
The concept of RNA shapes have been introduced by Giegerich, Voss, and
Rehmsmeier in 2004~\cite{giegerich}.  In these shapes, no unpaired
regions are included and nested bonds are combined. For instance,
the secondary structure $$..(((.((..(((....))).(((.....))))))))..$$
has the following RNA shape:
$$(()())$$
Originally, the RNA shapes have
been defined in words with a single type of parenthesis. They are
counted by Motzkin numbers~\cite{choi,giegerich,delest-viennot} and
correspond exactly to non-crossing matchings avoiding the endhered
pattern \ptwoone\,.

We adapt the Giegerich-Voss-Rehmsmeier reduction to matchings with
crossings represented by words with different types of
parentheses. The result of this adaptation is what Rødland called {\em
  collapsed structures}~\cite{rodland}.  This is done by keeping only
$(i,j)$ pairs in matching such that $(i+1,j-1)$ does not belong to the
matching and then reindexing the pairs. The number of
\ptwoone\ patterns in resulting reduced matchings (RNA shapes) is
obviously $0$. Interestingly, the number of RNA shapes with at least
one occurrence of pattern \ponetwo\ increases up to 59 (60 with
FR3D-python). Among those, 8 (7 with FR3D-python) RNA structures have
pattern \ptwothreeone\ and 13 have pattern \pthreeonetwo\,. Other size
3 patterns (\ponethreetwo,\pthreetwoone,\ptwoonethree,\ponetwothree)
are not detected in RNA shapes.  It is expected for patterns
\ponethreetwo, \pthreetwoone, and \ptwoonethree, as they contain the
pattern \ptwoone\ forbidden in RNA shapes.

\begin{figure}[h]
 \centering
 \includegraphics[width=1\textwidth]{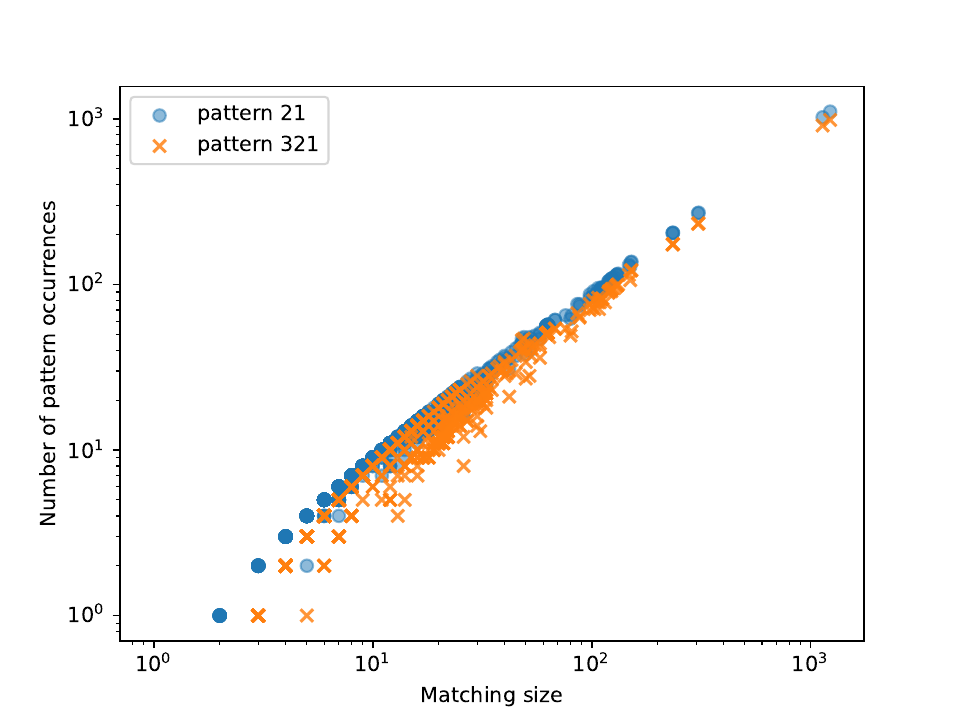}
 \caption{Scatter plot of the number of occurrences of patterns 21 and
   321 as a function of the size of the matching. Axes are in
   logarithmic scale.  RNA with no occurrences of patterns 21 and 321
   are not displayed. Matchings are obtained using FR3D-python, the
   results for x3DNA-DSSR are similar and available on git
   repository \url{https://gitlab.com/celiabiane/endhered_pattern}.  }
    \label{fig:scatter}
\end{figure}

In this paper, we study only endhered patterns of size 2 and 3.  The
analysis of more complex patterns in native (real-world) data may be a
part of future works.  The function {\tt count\_pattern} in {\tt
count\_visualisation.py} from our GitLab repository can be applied
to any endhered pattern.

\section{Conclusion and discussions}
\label{section:c}

We have examined distributions of endhered patterns of sizes 2 and 3
in matchings from theoretico-combinatorial and data-driven points of
view.  In matchings, patterns \ponetwo\ and \ptwoone\ have the same
distribution. Six endhered patterns of size 3 are divided into 2
equidistributed groups: \ponetwothree\,, \pthreetwoone\ and
\ponethreetwo\,, \pthreeonetwo\,, \ptwoonethree\,, \ptwothreeone\,.
Moreover, the joint distribution of patterns of the same size is
symmetric if the patterns are equivalent under the twist operation.
We have also provided corresponding asymptotic behavior of these distributions.

 In this work, we deliberately abstract from the nucleic acid sequences,
 and model the secondary structures directly by matchings.  Our
 results show that there is a big difference between observed and
 modelled pattern distributions. This means that non-restricted
 matchings are too permissive and new models should be developed to get closer to the observed pattern distributions.  We
 wonder if it is possible to describe essential features of RNA
 secondary structures with pseudoknots, using pattern-based
 restrictions. The classical Waterman's definition of RNA structures
 and its generalizations (see Subsection~\ref{subsec:models}) can be
 regarded as an example of pattern-based restriction used, among other
 things, to control the prediction algorithms for secondary structures
 from nucleic acid sequences. More insights into patterns frequencies
 in the native secondary RNA structures may guide us towards
 mixture of two definitions, complex enough to cover different
 pseudoknot-like structures presented in real data, but at the same
 time quite simple and neat to prevent the uncontrolled combinatorial
 explosion.

 The non-existence of certain short sequences in genomic
   and protein sequences is a well-known fact~\cite{nullomers1, her,
     maw0, maw1}.  Applications include cancer research~\cite{cgt,
     9s1r} and forensic science~\cite{gos}. Less is known about the
   forbidden secondary structures, although some interesting works
   about theoretical (im)possibility of inverse RNA folding have been
   published recently~\cite{uRNA, uRNA-ponty}.  One of the following
   research directions would be to determine what influence the
   distribution of endhered patterns in the native RNA.  Some
   configurations are probably forbidden due to physicochemical
   constraints on the bending of RNA (something like Waterman-Ponty
   restrictions, but for the case that include pseudoknots), others may
   not be present because of biological reasons.  Are there some
   evolutionary mechanisms that divert the distribution of patterns
   from the theoretically observed in the equi-probabilistic model of
   matchings? How pattern distributions in secondary structures are
   related to RNA dynamics and function?
 
 For any new combinatorial characterization of RNA structures, we need
 to develop a method for estimating their affinity with structures
 observed in native molecules.  One such method could also be
 pattern-based: compare the distribution of patterns in native RNA
 with theoretically calculated distributions over matchings avoiding
 certain patterns. Our exploratory study suggest, for instance, that
 the patterns \ptwoonethree\ and \ponetwothree\ never appear in
 RNA. Moreover, PDB references presented in Table~\ref{table:data}
 look very interesting, especially 4M4O, 5U3G, and 7K16.
 
\section*{Acknowledgments}

We would like to express our immeasurable gratitude to Matteo
Cervetti, Yann Ponty for the discussions they had with us during the
beginning of the work on the endhered patterns, to Justin Masson for
help with python code, to Daniel Pinson for proofreading the article,
and to the anonymous reviewers for their valuable comments and
suggestions.  This research was funded, in part, by the Agence
Nationale de la Recherche (ANR), grant ANR-22-CE48-0002. For the
purpose of Open Access, in accordance with the grant's open access
conditions, a CC-BY public copyright licence has been applied by the
authors to the present document and will be applied to all subsequent
versions up to the Author Accepted Manuscript arising from this
submission. Versions of the present document are available here:
\url{https://arxiv.org/abs/2404.18802},
\url{https://hal.science/hal-04563757}, and
\url{https://kirgizov.link/publications/endhered-first.pdf}.
Authors were supported, in part, by
Bourgogne-Franche-Comté region (France), project ANER ARTICO.

\bibliographystyle{plain} 
\bibliography{biblio}

\end{document}